\newcommand{\stepset}{\mathcal{S}}
\newcommand{\walksym}{\omega}
\newcommand{\walk}[1]{\walksym_{#1}}
\newcommand{\neighrayrho}{\Omega \setminus (\rho,\infty)}
\newcommand{\E}{\mathbb{E}}
\newcommand{\N}{\mathbb{N}}
\newcommand{\PR}{\mathbb{P}}
\newcommand{\R}{\mathbb{R}}
\newcommand{\V}{\mathbb{V}}
\newcommand{\Z}{\mathbb{Z}}
\newcommand{\Bc}{\mathcal{B}}
\newcommand{\Ec}{\mathcal{E}}
\newcommand{\Hc}{\mathcal{H}}
\newcommand{\Nc}{\mathcal{N}}
\newcommand{\Rc}{\mathcal{R}}
\newcommand{\Wc}{\mathcal{W}}
\newcommand{\LandauO}{\mathcal{O}}
\newcommand{\Landauo}{o}
\newcommand{\Mc}{\mathcal{M}}
\newcommand{\proofend}
{ \hfill $_{\square}$}
\newcommand{\diamondend}
{ \hfill ${\lozenge} $}
\newtheorem{theo}{Theorem}[section]
\newtheorem{lemma}[theo]{Lemma}
\newtheorem{prop}[theo]{Proposition}
\newenvironment{definition}[1][]{\refstepcounter{theo} \medskip \noindent \textbf{\textit{Definition \thetheo~(#1)}} }{ \diamondend }
\newenvironment{remark}[1][]{\refstepcounter{theo}\medskip \noindent\textbf{\textit{Remark \thetheo~(#1)}} }{ \hfill $_{\blacksquare} $\\ }
\renewenvironment{proof}[1][]
{ \noindent
	\ifthenelse{\equal{#1}{}}
		{\textbf{Proof:}}
		{\textbf{Proof #1\ignorespaces:}}
}
{ \proofend\medskip }
\newenvironment{hypo}[1][]{\medskip \noindent %
\textbf{\textit{Hypothesis #1:}} }
{ \diamondend \medskip }
\newcommand{\roleW}{r\^ole}
\newcommand{\role}{\roleW~}
\begin{document}


\title{
A half-normal distribution scheme for generating functions and the unexpected behavior of Motzkin paths
}
\author{Michael Wallner\thanks{Institute of Discrete Mathematics and Geometry, TU Wien, Wiedner Hauptstr. 8-10/104, A-1040 Wien, Austria}}

\author[M. Wallner]{Michael Wallner\addressmark{1} }

\title[A half-normal distribution scheme]{A half-normal distribution scheme for generating functions and the unexpected behavior of\\Motzkin paths}

\address{\addressmark{1}Institute of Discrete Mathematics and Geometry, TU Wien, Wiedner Hauptstr. 8-10/104, A-1040 Wien, Austria}

\keywords{Lattice path, analytic combinatorics, singularity analysis, limit laws}

\maketitle

\begin{abstract}
We present an extension of a theorem by Michael Drmota and Mich\`ele Soria [Images and Preimages in Random Mappings, $1997$] that can be used to identify the limiting distribution for a class of combinatorial schemata. This is achieved by determining analytical and algebraic properties of the associated bivariate generating function. We give sufficient conditions implying a half-normal limiting distribution, extending the known conditions leading to either a Rayleigh, a Gaussian, or a convolution of the last two distributions. We conclude with three natural appearances of such a limiting distribution in the domain of Motzkin paths. 
\end{abstract}

\section{Introduction}
\label{sec:intro}

Generating functions have proved very useful in the analysis of combinatorial questions. The approach builds on general principles of the correspondence between combinatorial constructions and functional operations. The symbolic method~\cite{flaj09} provides a direct translation of the structural description of a class into an equation on generating functions. In~\cite{DrSo95}, Drmota and Soria provided general methods for the analysis of bivariate generating functions $F(z,u) = \sum f_{nk} z^n u^k$. In general,~$n$ is the length or size, and $k$ is the value of a ``marked'' parameter. 

They continued their work in~\cite{DrSo97}, wherein they derived three general theorems which identify the limiting distribution for a class of combinatorial schemata from certain properties of their associated bivariate generating function. These lead to a Rayleigh, a Gaussian, or a convolution of both distributions. 
Especially for a Gaussian limit distribution there are many schemata known: Hwang's quasi-powers theorem~\cite{hwan98}~or~\cite[Theorem~IX.8]{flaj09}, the supercritical composition scheme~\cite[Proposition~IX.6]{flaj09}, the algebraic singularity scheme \cite[Theorem~IX.12]{flaj09}, an implicit function scheme for algebraic singularities \cite[Theorem~2.23]{drmo09}, or the limit law version of the Drmota-Lalley-Woods theorem~\cite[Theorem~8]{badr15}. But such schemata also exist for other distributions, like e.g., the Airy distribution, see~\cite{bfss01}. In general it was shown in~\cite{bbpb12} and \cite[Theorem~10]{badr15} that even in simple examples ``any limit law'', in the sense that the limit curve can be arbitrarily close to any c\`adl\`ag multi-valued curve in $[0,1]^2$, is possible. 

In this paper we extend the work of~\cite{DrSo97}, by providing an additional limit theorem, Theorem~\ref{theo:theo4}, which reveals a half-normal distribution. This distribution is generated by the absolute value $|X|$ of a normally distributed random variable $X$ with mean $0$. We will encounter several distributions, whose most important properties are summarized in Table~\ref{tab:compprobdis}. 

We also present three natural appearances of this distribution in combinatorial constructions. In particular we consider Motzkin walks. Despite them being well-studied objects~\cite{motz48,dosh77,bern99}, they still hide some mysterious properties. Our applications extend some examples of random walks presented by Feller in \cite[Chapter III]{fell68} to Motzkin walks. We show that the same phenomena appear which, to quote Feller, ``not only are unexpected but actually come as a shock to intuition and common sense''. 

\begin{table}
	\renewcommand{\arraystretch}{1.5}
	\begin{tabular}{|c||c|c|c|c|}
		\hline
		&
		\underline{\bf Geometric}
		&
		\underline{\bf Normal}
		&
		\underline{\bf Half-normal}
		&
		\underline{\bf Rayleigh}
		\\
		&
		$
			\operatorname{Geom}(p)
		$
		&
		$
			\Nc(\mu,\sigma)
		$
		&
		$
			\Hc(\sigma)
		$
		&
		$
			\Rc(\sigma)
		$
		\\
		\hline\hline Graph &
		{\includegraphics[width=0.18\textwidth]{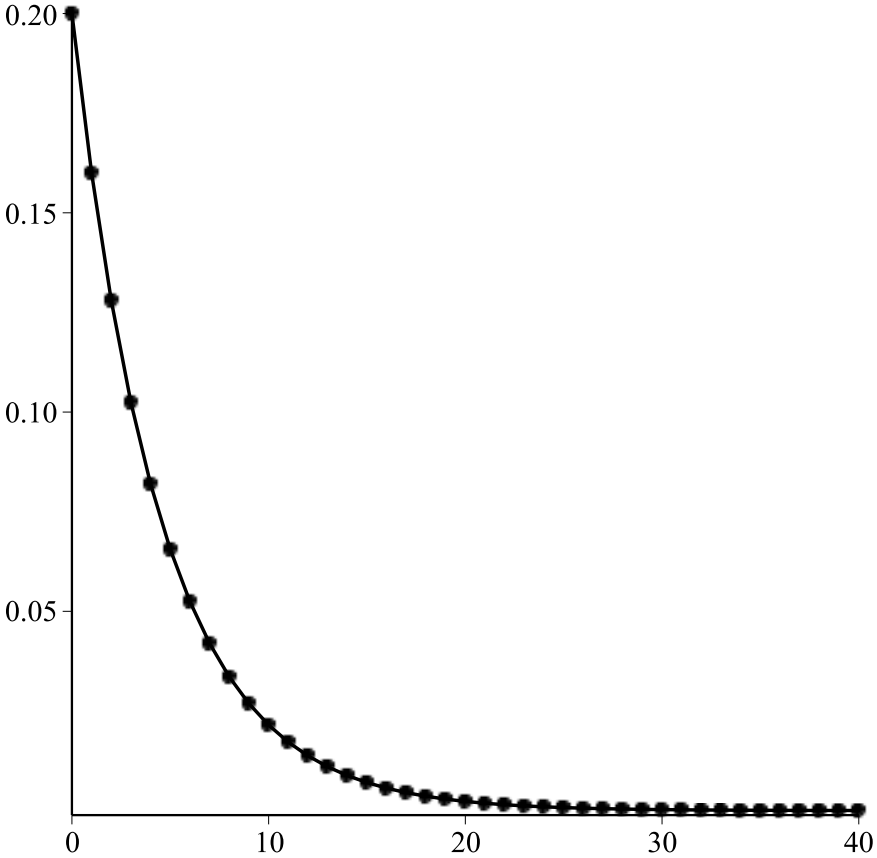}}
		&
		{\includegraphics[width=0.18\textwidth]{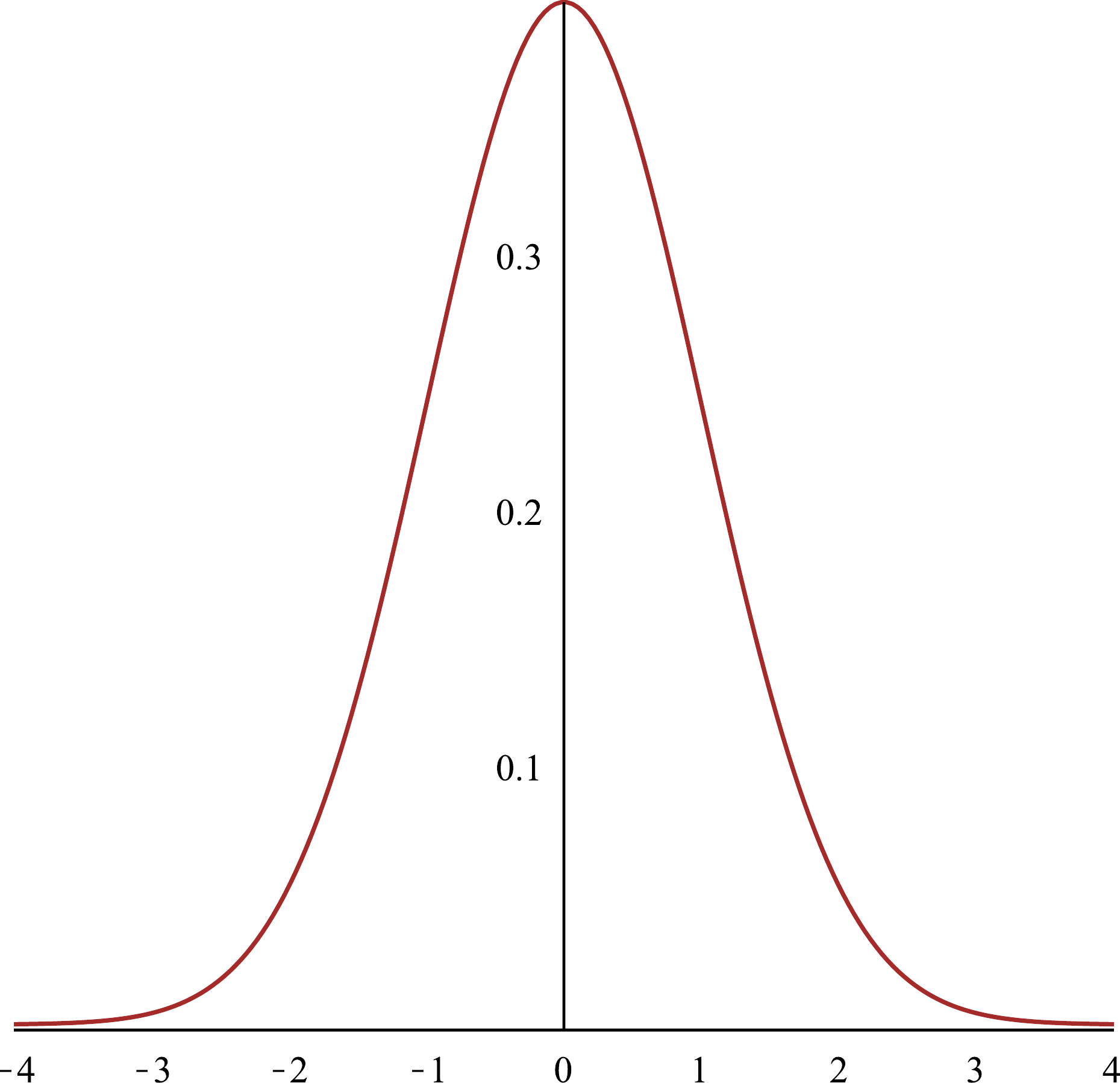}}
		&
		{\includegraphics[width=0.18\textwidth]{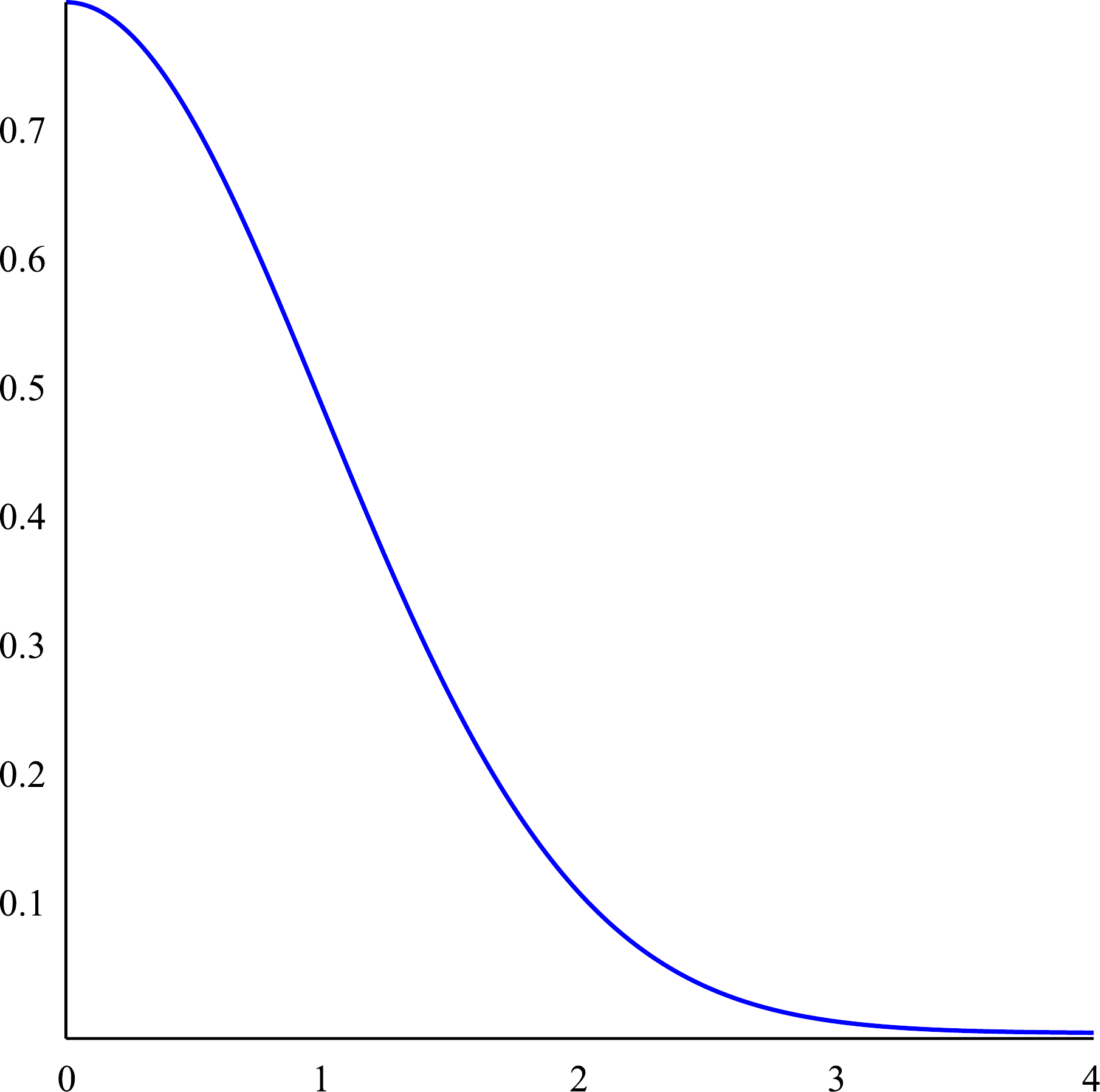}}
		&
		{\includegraphics[width=0.18\textwidth]{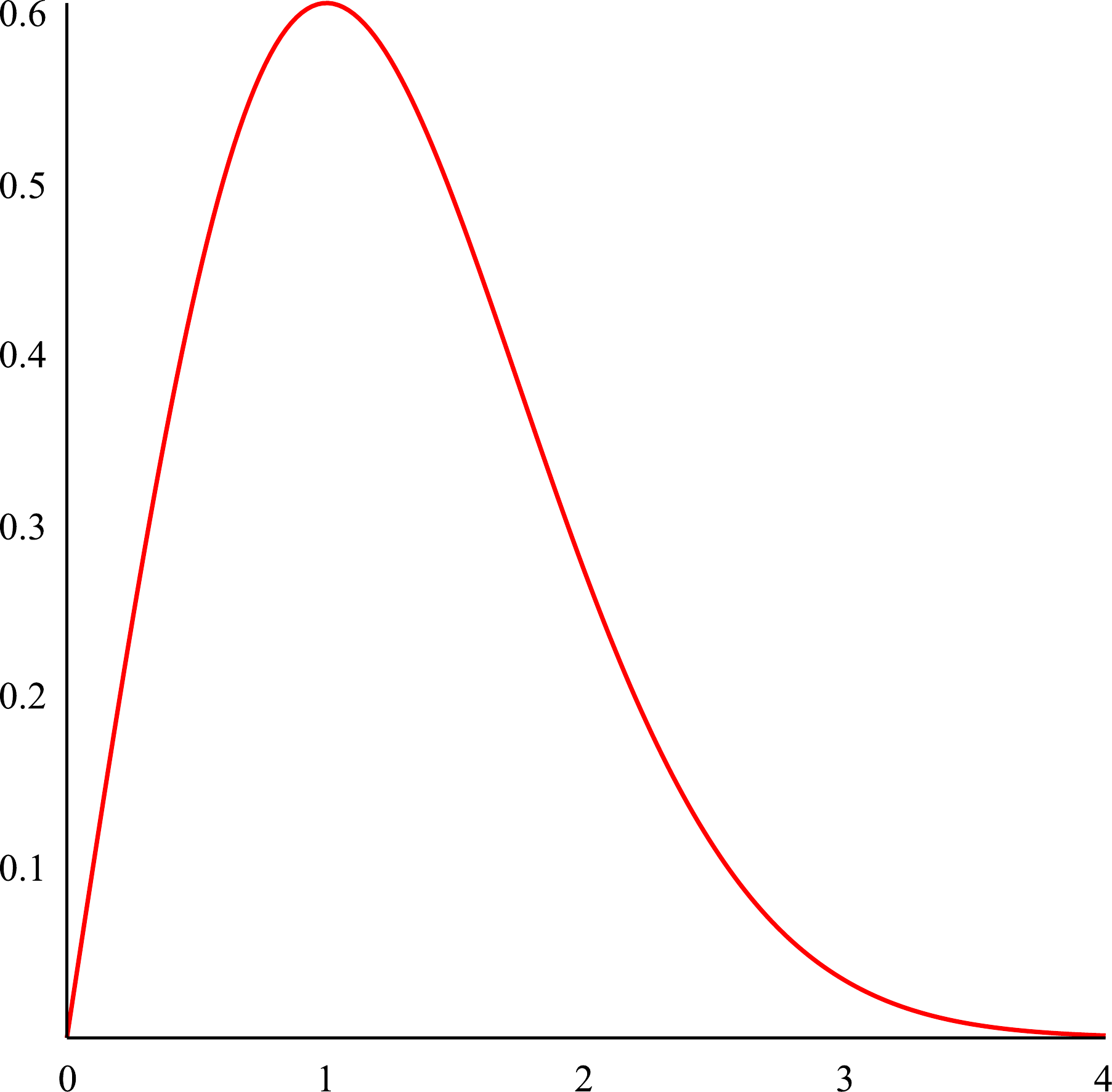}}
		\\
		\hline
		Support
		&
		$
			x \in \{0,1,\ldots\}
		$
		&
		$
			x \in \R
		$
		&
		$
			x \in \R_{\geq 0}
		$
		&
		$
			x \in \R_{\geq 0}
		$
		\\
		\hline
		PDF
		&
		$
			(1-p)^k p
		$
		&
		$
			\frac{1}{\sqrt{2 \pi \sigma^2}} \exp\left(-\frac{(x-\mu)^2}{2\sigma^2}\right)
		$
		&
		$
			\sqrt{\frac{2}{\pi \sigma^2}} \exp\left(-\frac{x^2}{2\sigma^2}\right)
		$
		&
		$
			\frac{x}{\sigma^2} \exp\left(-\frac{x^2}{2\sigma^2}\right)
		$
		\\
		\hline
		Mean
		&
		$
			\frac{1-p}{p}
		$
		&
		$
			\mu
		$
		&
		$
			\sigma\sqrt{\frac{2}{\pi}}
		$%
		&
		$
			\sigma\sqrt{\frac{\pi}{2}}
		$%
		\\
		\hline
		Variance
		&
		$
			\frac{1-p}{p^2}
		$
		&
		$
			\sigma^2
		$
		&
		$
			\sigma^2 \left(1 - \frac{2}{\pi}\right)
		$
		&
		$
			\sigma^2 \left(2 - \frac{\pi}{2} \right)
		$
		\\
		\hline
	\end{tabular}
	\renewcommand{\arraystretch}{1}
	\caption{A comparision of the geometric, normal, half-normal, and Rayleigh distribution. We will encounter all four of them in the context of Motzkin walks.}
	\label{tab:compprobdis}
\end{table}

{\bf Plan of this article.} 
First, in Section~\ref{sec:halfnormal}, we present our main contribution: a scheme for bivariate generating functions leading to a half-normal distribution.
In Section~\ref{sec:motzkin}, we introduce Motzkin paths and establish the analytic framework which will be used in the subsequent sections. 
In Section~\ref{sec:motzprop}, we apply our result to three properties of Motzkin walks: the number of sign changes, the number of returns to zero, and the height. In the case of zero drift a half-normal distribution appears. 
In Section~\ref{sec:conclusion}, we give a summary of our results.

\section{The half-normal theorem}
\label{sec:halfnormal}

Let $c(z) = \sum_n c_n z^n$ be the generating function of a combinatorial structure and $c(z,u) = \sum c_{nk} z^n u^k$ be the bivariate generating function where a parameter of interest has been marked, i.e.,~$c(z,1) = c(z)$. We introduce a sequence of random variables $X_n, n \geq 1$, defined by
\begin{align*}
	\PR[X_n = k] &= \frac{c_{nk}}{c_n} = \frac{[z^n u^k] c(z,u)}{[z^n] c(z,1)},
\end{align*}
where $\PR$ denotes the probability. As we are interested in the asymptotic distribution of the marked parameter among objects of size $n$ where $n$ tends to infinity, the probabilistic point of view is given by finding the limiting distribution of $X_n$.

Important combinatorial constructions are ``sequences'' or ``sets of cycles'' (in the case of exponential generating functions) which imply the following decomposition 
\begin{align*}
	c(z,u) &= \frac{1}{1- a(z,u)},
\end{align*}
with a generating function $a(z,u)$ corresponding to the elements of the sequence, or the cycles, respectively.
Another important and recurring phenomenon is the one of an algebraic singularity~$\rho(u)$ of the square-root type such that $a(\rho(1),1)=1$. According to further analytic properties of $a(z,u)$ the limiting distribution of $X_n$ is shown to be either Gaussian, Rayleigh, the convolution of Gaussian and Rayleigh (see \cite[Theorems 1-3]{DrSo97}), or half-normal (see Theorem~\ref{theo:theo4}). 

We start with the general form of the analytic scheme. 
In contrast to the original hypothesis~[H] in \cite{DrSo97} we call our hypothesis~[H'] because we drop the condition that $h(\rho,1) >0$ and we require it only for $\rho(u) = const$.

\begin{hypo}[{[H']}]
	Let $c(z,u) = \sum_{n,k} c_{nk} z^n u^k$ be a power series in two variables with non-negative coefficients $c_{nk} \geq 0$ such that $c(z,1)$ has a radius of convergence of $\rho > 0$. 
	
	We suppose that $1/c(z,u)$ has the local representation
	\begin{align}
		\label{eq:decompFinv}
		\frac{1}{c(z,u)} &= g(z,u) + h(z,u) \sqrt{1 - \frac{z}{\rho}},
	\end{align}
	for $|u-1| < \varepsilon$ and $|z - \rho| < \varepsilon$, $\arg(z-\rho) \neq 0$, where $\varepsilon > 0$ is some fixed real number, and $g(z,u)$, and $h(z,u)$ are analytic functions.
	Furthermore, these functions satisfy $g(\rho,1)=0$.
	
	In addition, $z=\rho$ is the only singularity on the circle of convergence $|z| = |\rho|$, and $1/c(z,u)$, respectively $c(z,u)$, can be analytically continued to a region $|z| < \rho + \delta, |u|<1+\delta, |u-1| > \frac{\varepsilon}{2}$ for some $\delta > 0$. 
\end{hypo}

\begin{theo}[Half-normal limit theorem]
	\label{theo:theo4}
	Let $c(z,u)$ be a bivariate generating function satisfying [H']. If $g_z(\rho,1) \neq 0$, $h_u(\rho,1) \neq 0$, and $h(\rho,1) = g_u(\rho,1) = g_{uu}(\rho,1) = 0$, then the sequence of random variables $X_n$ defined by
	\begin{align*}
		\PR[X_n = k] &= \frac{[z^n u^k] c(z,u)}{[z^n] c(z,1)},
	\end{align*}
	has a half-normal limiting distribution, i.e.,
	\begin{align*}
		\frac{X_n}{\sqrt{n}} \stackrel{\text{d}}{\to} \Hc(\sigma),
	\end{align*}
	where $\sigma = \sqrt{2}\frac{h_u(\rho,1)}{\rho g_z(\rho,1)}$, and $\Hc(\sigma)$ has density $\frac{\sqrt{2}}{\sqrt{\pi \sigma^2 }} \exp\left( - \frac{z^2}{2 \sigma^2} \right)$ for $z \geq 0$. Expected value and variance are given by
	\begin{align*}
		\E[X_n] &= \sigma \sqrt{\frac{2}{\pi}} \sqrt{n} + \LandauO(1) & \text{and} &&
		\V[X_n] &= \sigma^2 \left( 1 - \frac{2}{\pi} \right) n + \LandauO(\sqrt{n}).
	\end{align*}
	
	Moreover, we have the local law
	\begin{align*}
		\PR[X_n = k] &= \frac{1}{\sigma} \sqrt{\frac{2}{\pi n}} \exp\left( -\frac{k^2/n}{2\sigma^2 } \right) + \LandauO\left(kn^{-3/2}\right) + \LandauO\left( n^{-1} \right),
	\end{align*}
	uniformly for all $k \geq 0$.
\end{theo}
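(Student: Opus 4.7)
The plan is to follow the moment-generating-function route used by Drmota and Soria~\cite{DrSo97} for the Rayleigh and Gaussian schemes, and to identify the half-normal limit by computing
\[
 M_n(s) := \E\bigl[e^{sX_n/\sqrt{n}}\bigr] = \frac{[z^n]\, c(z,\, e^{s/\sqrt{n}})}{[z^n]\, c(z,1)}
\]
via singularity analysis carried out uniformly in $u$. Write $Z := \sqrt{1-z/\rho}$, $v := u-1$, $a := -\rho g_z(\rho,1)$, and $b := h_u(\rho,1)$. The first step is to extract the leading local behaviour of $1/c(z,u)$ at $(\rho,1)$: the assumptions $g(\rho,1) = g_u(\rho,1) = g_{uu}(\rho,1) = h(\rho,1) = 0$ kill every low-order term in the bivariate Taylor expansion except the two survivors $aZ^{2}$ and $bvZ$, giving
\[
 \frac{1}{c(z,u)} = aZ^{2} + b v Z + \LandauO(Z^{4}) + \LandauO(vZ^{2}) + \LandauO(v^{2}Z) + \LandauO(v^{3}).
\]
At $u=1$ only $aZ^{2}=a(1-z/\rho)$ remains, so $c(z,1)$ has a simple pole at $z=\rho$ and the transfer theorem yields $[z^n]\,c(z,1) \sim a^{-1}\rho^{-n}$.

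Next I would treat the numerator $[z^n]\,c(z,u)$ uniformly for $u$ close to $1$. Approximating $c(z,u) \approx 1/(Z(aZ+bv))$ and deforming the Cauchy contour to a Flajolet--Odlyzko Hankel contour $z = \rho(1-\tau/n)$, the two arms give opposite determinations of $Z$; combining them produces the integral representation
\[
 [z^n]\,c(z,1+v) \;\sim\; \rho^{-n}\cdot\frac{2bv}{\pi}\int_0^\infty \frac{e^{-n\sigma^{2}}}{a^{2}\sigma^{2}+b^{2}v^{2}}\, d\sigma,
\]
whose right-hand side depends on $v$ only through $v\sqrt{n}$ after the rescaling $\sigma = x/\sqrt{n}$, matching the natural scale of the scheme. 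Substituting $u=e^{s/\sqrt{n}}$, so that $v \sim s/\sqrt{n}$, and using $1/(a^{2}x^{2}+b^{2}s^{2}) = \int_0^\infty e^{-(a^{2}x^{2}+b^{2}s^{2})t}\,dt$ followed by elementary Gaussian integration evaluates the limit in closed form,
\[
 M_n(s) \;\longrightarrow\; e^{\sigma^{2}s^{2}/2}\bigl(1+\mathrm{erf}(\sigma s/\sqrt{2})\bigr),
\]
with $\sigma = \sqrt{2}\, h_u(\rho,1)/(\rho g_z(\rho,1))$. This is exactly the moment generating function of $\Hc(\sigma)$, so Curtiss's theorem delivers $X_n/\sqrt{n} \stackrel{\text{d}}{\to} \Hc(\sigma)$. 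The mean and variance fall out of a Taylor expansion of $M_n(s)$ to order $s^{2}$, and the uniform local law follows from Cauchy's formula in $u$ combined with the explicit error estimates from the Hankel contour.

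The main technical obstacle is the uniformity of the singularity analysis in $u$: the two heuristic regimes $|v|\gg n^{-1/2}$, where $c(z,u) \sim 1/(bv\sqrt{1-z/\rho})$ produces a square-root singularity, and $v=0$, where $c$ has a simple pole, must be stitched together through the transition zone $|v|=\Theta(n^{-1/2})$. In particular one must show that the subleading terms in the local expansion---notably the first non-vanishing $u$-derivative $g_{uuu}(\rho,1)v^{3}$ of $g$ and the $Z^{2}$ corrections to $g$ and $h$---contribute only to the error along the rescaled Hankel contour, and that the local approximation $c(z,u)\approx 1/(Z(aZ+bv))$ can be upgraded to a genuine asymptotic identity with controlled remainder on a full $\Delta$-domain at $\rho$. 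Once this uniform Hankel estimate is in place, identifying the resulting integral with the half-normal MGF and deducing both the weak limit and the local law are routine computations.
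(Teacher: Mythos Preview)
Your proposal is correct and follows essentially the same route as the paper's (sketched) proof: both extract $[z^n]c(z,e^{\cdot/\sqrt{n}})$ via a Cauchy integral on a rescaled Hankel contour $z=\rho(1+s/n)$ near the unique singularity and identify the resulting limit as the transform of $\Hc(\sigma)$. The only difference is that the paper works with the characteristic function while you use the moment generating function and Curtiss's theorem; this is immaterial here, and your explicit integral representation together with the identification of the technical crux (uniformity through the transition zone $|u-1|=\Theta(n^{-1/2})$) in fact goes beyond what the paper's sketch spells out.
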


\begin{remark}[Non-trivial dependency of $\rho$ on $u$]
	\label{rem:strongermaintheo}
	The assumption of a constant singularity in $z$ given by $\rho$ can be weakened to a singularity $\rho(u) = \rho(1) + \LandauO((u-1)^3)$, i.e.,~$\rho'(1)=\rho''(1)=0$. However, no example is known where $\rho(u)$ is not constant in a neighborhood of $u \sim 1$.
\end{remark}

\begin{proof}[(Sketch)]
	The proof ideas are similar to the ones of \cite[{\sc Theorem 1}]{DrSo97}.
	For details on the half-normal distribution we refer to \cite{tbha14}, but all we need is the characteristic function. 
	The main idea is to derive the asymptotic form of the characteristic function of $X_n/\sqrt{n}$. Since
	\begin{align*}
		\E[ e^{i t X_n /\sqrt{n}}] &= \frac{[z^n] c(z,e^{\frac{it}{\sqrt{n}}})}{[z^n] c(z,1)},
	\end{align*}
	we need to expand $[z^n]c(z,u)$ for $u = e^{it/\sqrt{n}} = 1 + \frac{it}{\sqrt{n}} + \LandauO(n^{-1})$. To achieve this, we will apply Cauchy's integral formula for the following path of integration $\Gamma = \Gamma_1 \cup \Gamma_2$:
	\begin{align*}
		\Gamma_1 &= \left\{ z = \rho\left(1+\frac{s}{n}\right) : s \in \gamma' \right\},\\
		\Gamma_2 &= \left\{ z = R e^{i \vartheta} : R = \rho\left| 1 + \frac{\log^2 n + i}{n} \right|,~\arg\left(1 + \frac{\log^2 n + i}{n} \right) \leq |\vartheta| \leq \pi \right\},
	\end{align*}
	where $\gamma' = \{ s : |s| = 1,~\Re s \leq 0\} \cup \{ s : 0 < \Re s < \log^2 n,~ \Im s = \pm 1 \}$ is the major part of a Hankel contour $\gamma$, see Figure~\ref{fig:hankel}. 
	
	\begin{figure}[ht]
		\begin{center}	
			\includegraphics[width=0.27\textwidth]{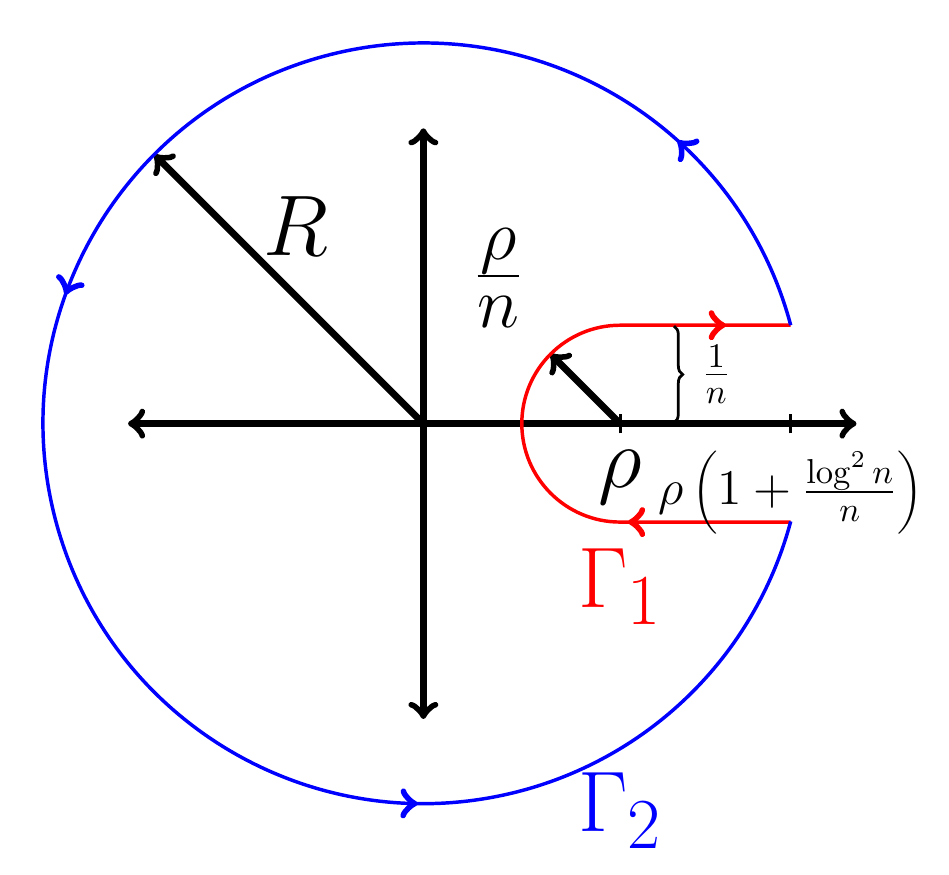} \qquad \qquad \qquad
			\includegraphics[width=0.27\textwidth]{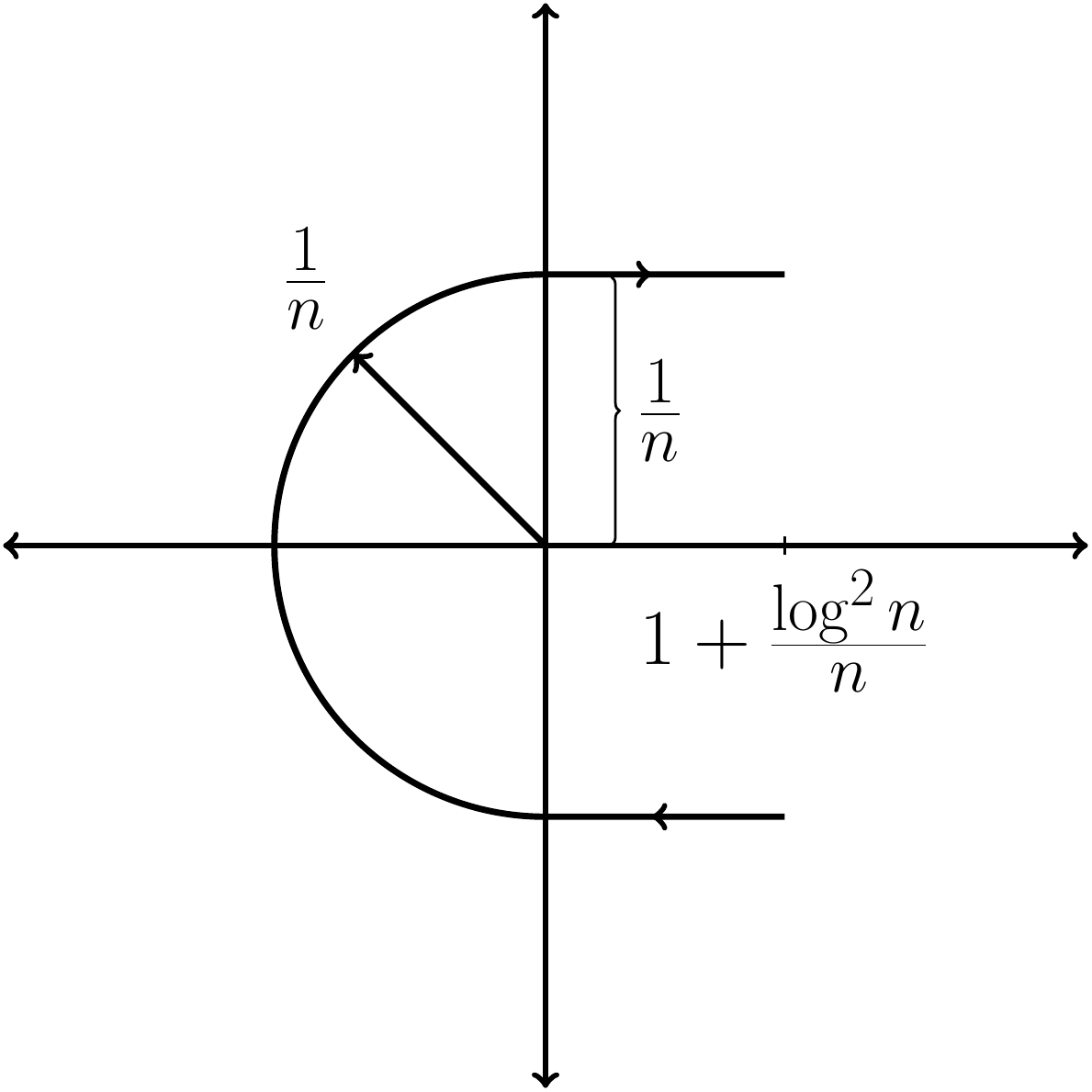}
			\caption{Hankel contour decomposition (left), and contour of $\gamma'$ (right).}
			\label{fig:hankel}
		\end{center}
	\end{figure}
	
	What remains is to investigate the parts separately: The first part gives the claimed result, whereas the second one is asymptotically negligible.
	Note that the changes in the hypothesis~[H] are responsible for the appearance of characteristic function of the half-normal distribution in the limit.
	We omit these technical steps.
\end{proof}

\section{Motzkin paths}
\label{sec:motzkin}

In this section we present needed, known results on directed lattice paths. Readers familiar with the exposition of Banderier and Flajolet \cite{bafl02} or related results may skip this section.

 \begin{definition}[Lattice paths] \label{def:LP}
 A {\it step set} $\stepset \subset \Z^2$ is a fixed, finite set of vectors $\{ (a_1,b_1), \ldots,$ $(a_m,b_m)\}$. 
An $n$-step \emph{lattice path} or \emph{walk} is a sequence $(v_1,\ldots,v_n)$ of vectors, such that $v_j$ is in $\stepset$. 
Geometrically, it is a set of points $\{\walk{0},\walk{1},\ldots,\walk{n}\}$ where $\walk{i} \in \Z^2, \walk{0} = (0,0)$ 
and $\walk{i}-\walk{i-1} = v_i$ for $i=1,\ldots,n$.
 The elements of $\stepset$ are called \emph{steps} or \emph{jumps}. 
The \emph{length} $|\walksym|$ of a lattice path is its number $n$ of jumps. 
 \end{definition}

We restrict our attention to \emph{simple directed paths} for which every element in the step set $\stepset$ is of the form $(1,b)$. 
In other words, these walks constantly move one step to the right. We introduce the abbreviation $\stepset = \{ b_1, \ldots, b_m \}$ in this case.

	Along these restrictions, we introduce the following classes (see Table \ref{tab:dirTypes}):
		A {\it bridge}\index{lattice path!bridge} is a path whose end-point $\walk{n}$ lies on the $x$-axis.
		A {\it meander}\index{lattice path!meander} is a path that lies in the quarter plane $\Z_+^2$.
		An {\it excursion}\index{lattice path!excursion} is a path that is at the same time a meander and a bridge. 
		Their generating functions have been fully characterized in \cite{bafl02} by means of analytic combinatorics, see \cite{flaj09}. 

\begin{table}[ht]
	\begin{center}
	\begin{tabular}{|c|c|c|}
		\hline                        & ending anywhere & ending at $0$ \\
		\hline 
				                              & \multirow{3}{*}{\includegraphics[width=5cm]{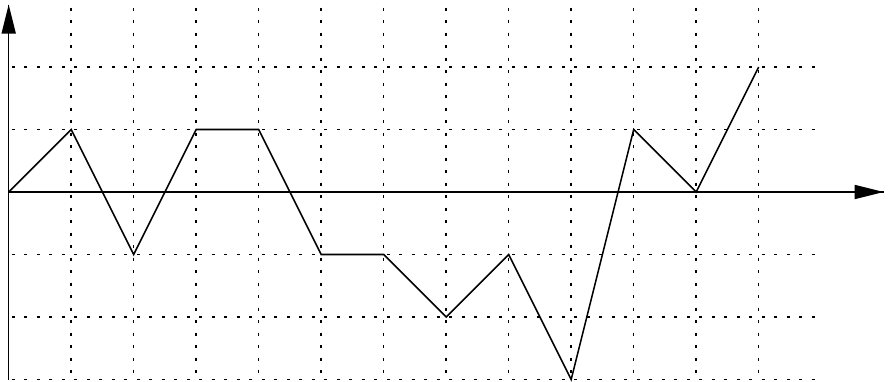}} & \multirow{3}{*}{\includegraphics[width=5cm]{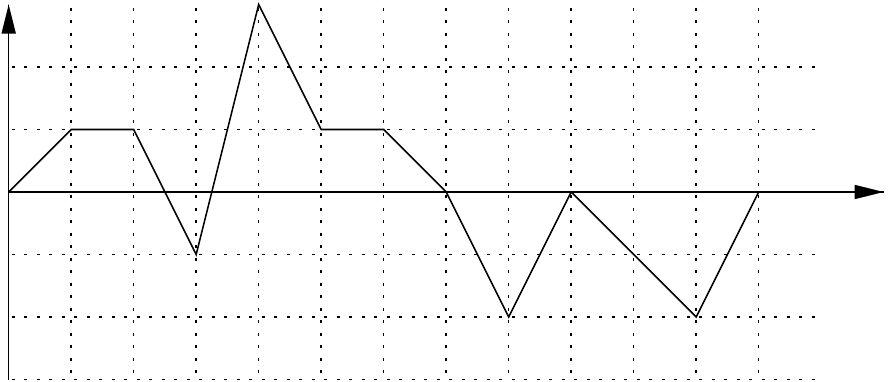}}\\
				                              & & \\
				                              & & \\
				                unconstrained	& & \\
				                	(on $\Z$)	  & & \\
		                                  &  walk/path ($\Wc$)                   & bridge ($\Bc$)\\
		                                  &  $W(z) = \frac{1}{1-zP(1)}$          & $B(z) = z \frac{u_1'(z)}{u_1(z)}$\\ 
	\hline 
				                   & \multirow{3}{*}{\includegraphics[width=5cm]{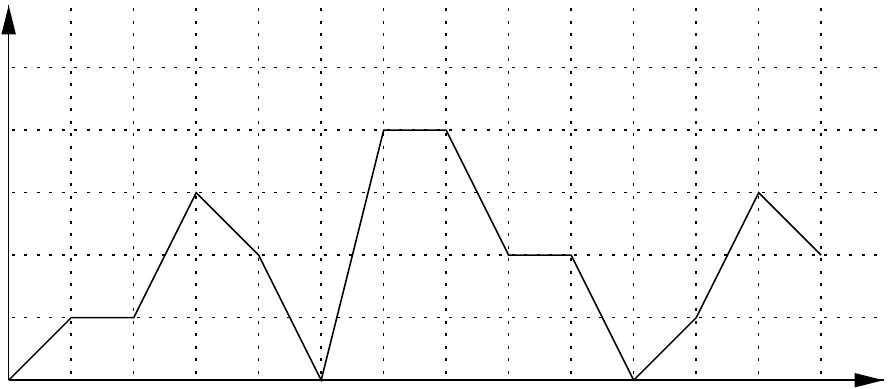}} & \multirow{3}{*}{\includegraphics[width=5cm]{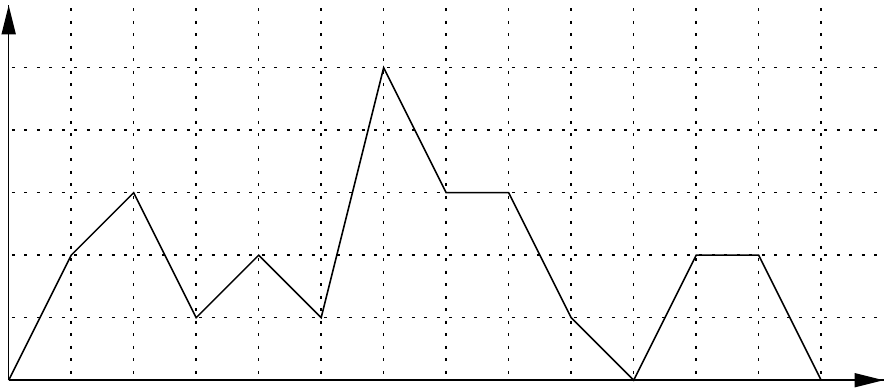}}\\
				                   & & \\
				                   & & \\
				                constrained	& & \\
				                (on $\Z_+$)	& & \\
		                                &  meander ($\Mc$)                       & excursion ($\Ec$)\\
		                                &  $M(z) = \frac{1-u_1(z)}{1-zP(1)}$          & $E(z) = \frac{u_1(z)}{p_{-1}z}$\\
		\hline
\end{tabular}
\end{center}
\caption{The four types of paths: walks, bridges, meanders and excursions, and the corresponding generating functions for Motzkin paths {\cite[Fig. 1]{bafl02}}.}
\label{tab:dirTypes}
\end{table}

\begin{definition}[Motzkin paths]
	\label{ex:motzkin}
	A Motzkin path is a path that starts at the origin and is given by the step set $\stepset = \{-1,0,+1\}$. 
\end{definition}

	We will refer to Motzkin walks/meanders/bridges/excursions depending on the different restrictions. 
	In the literature Motzkin paths are often defined as Motzkin excursions, e.g.~in \cite{dosh77}.
			
	In many situations it is useful to associate weights to single steps. 
	
	\begin{definition}[Weights]
		For a given step set $\stepset$, we define the respective {\it system of weights} 
	as $ \{p_s~|~s \in \stepset\}$ where $p_s >0$ is the associated weight to step $s \in \stepset$. 
		The {\it weight of a path} is defined as the product of the weights of its individual steps. 
	\end{definition}

	A typical weighted lattice path model is $p_s=1$ (enumeration of paths), or $\sum_s p_s = 1$ (probabilistic model of paths, i.e.,~step $s$ is chosen with probability $p_s$).
	
	The following definition is the algebraic link between weights and steps. It is given only for the case of Motzkin paths, which is sufficient for our purpose. 
			
	\begin{definition}[Jump polynomial of Motzkin paths]
		The \emph{jump polynomial} is defined as the polynomial in $u, u^{-1}$ (a Laurent polynomial)
		\begin{align*}
			P(u) := p_{-1}u^{-1} + p_{0} + p_1 u, \qquad \text{ with } \qquad  p_{-1},p_0,p_1 > 0.
		\end{align*}
		The \emph{kernel equation} is defined by
		\begin{align*}
			1-zP(u) &= 0, &&\text{ or equivalently } &
			u - z(u P(u)) &=0.
		\end{align*}
		The quantity $K(z,u) := u - zu P(u)$ is called \emph{kernel}.
	\end{definition}
	
	A walk is called \emph{periodic} with period $p$ if there exists a polynomial $H(u)$ and integers $b \in \Z$ and $p \in \N$, $p>1$ such that $P(u) = u^{b} H(u^p)$. Otherwise its called \emph{aperiodic}. 
	The condition $p_0 >0$ implies aperiodicity for Motzkin paths.
	Note that generating functions of aperiodic walks possess a unique singularity on the positive real axis \cite{bafl02}.  
	
	The kernel plays a crucial \role and is name-giving for the \emph{kernel method}, which is the key tool characterizing this family of lattice paths. The interested reader is referred to \cite[Chapter 2]{bafl02}. 
	In the heart of this method lies the observation that the kernel equation is of degree $2$ in $u$, and therefore has generically $2$ roots. These correspond to branches of an algebraic curve given by the kernel equation. 
	
	\begin{prop}[Roots of the kernel]
		\label{prop:rootsofthekernel}
		The kernel equation $1-zP(u)=0$ has $2$ solutions:
		\begin{align*}
			u_{1,2}(z) &= \frac{1-p_0 z \mp \sqrt{(1-p_0 z)^2 - 4 p_{-1}p_{1} z^2}}{2 p_{1} z}.
		\end{align*}
	\end{prop}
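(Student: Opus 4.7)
The plan is essentially to reduce the kernel equation to a quadratic in $u$ and apply the quadratic formula. Concretely, starting from
\begin{align*}
1 - zP(u) \;=\; 1 - z\bigl(p_{-1}u^{-1} + p_0 + p_1 u\bigr) \;=\; 0,
\end{align*}
I would first multiply through by $u$ (which is permitted because the equation $K(z,u)=u - zuP(u) = 0$ is equivalent to $1 - zP(u) = 0$ whenever $u \neq 0$, and $u=0$ is visibly not a solution since $p_{-1}>0$). This yields the polynomial equation
\begin{align*}
z\,p_1\,u^2 \;+\; (z\,p_0 - 1)\,u \;+\; z\,p_{-1} \;=\; 0,
\end{align*}
which is a genuine quadratic in $u$ with nonzero leading coefficient (as $zp_1 \neq 0$ for generic~$z$).

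The quadratic formula immediately gives
\begin{align*}
u \;=\; \frac{(1 - zp_0) \pm \sqrt{(1 - zp_0)^2 - 4\,z\,p_1 \cdot z\,p_{-1}}}{2\,z\,p_1},
\end{align*}
which is exactly the claimed expression with the $\mp$ sign. Since the equation has degree two in $u$, these are all the solutions, so the count is correct.

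It remains only to justify the labeling convention. The natural choice is that $u_1(z)$ denotes the \emph{small branch}, i.e.\ the unique solution that is analytic at $z=0$ and vanishes there. I would verify this by a short Taylor expansion: with the minus sign the numerator starts as $2p_{-1}p_1 z^2 + \LandauO(z^3)$, so $u_1(z) = p_{-1}z + \LandauO(z^2) \to 0$ as $z \to 0$; with the plus sign one gets $u_2(z) \sim 1/(p_1 z) \to \infty$. This fixes the assignment of $\mp$ to $u_{1,2}$.

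Since the argument is a direct application of the quadratic formula, I do not expect any real obstacle. The only mildly subtle point is the consistency of the sign convention with the use of $u_1$ throughout the paper (in particular in Table~\ref{tab:dirTypes}, where $u_1$ appears in the generating functions for meanders, excursions and bridges), which the short expansion above settles.
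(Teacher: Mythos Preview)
Your argument is correct and is exactly the intended one: the paper states this proposition without proof, treating it as an immediate consequence of the quadratic formula applied to $u - zuP(u) = 0$. Your additional check of the labeling convention via the Taylor expansion near $z=0$ is a nice touch that the paper only records informally in the sentence following the proposition.
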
	
	
	It holds that $\lim_{z \to 0} u_1(z) = 0$, and $\lim_{z \to 0} u_2(z) = \infty$.
	Because of that, we call $u_1(z)$ the small branch, and $u_2(z)$ the large branch.
	
	Banderier and Flajolet showed that the generating functions of bridges, excursions and meanders can be expressed in terms of the small branch(es) and the jump polynomial, see Table~\ref{tab:dirTypes}. 	
	The branch $u_1(z)$ is real positive near $0$. 
	It is responsible for the asymptotic behavior of bridges, excursions and meanders, compare \cite[Theorem 3 and 4]{bafl02}. 
	
	In order to understand their behavior we need the following constants:
	
	\begin{lemma}[Structural constants]
		\label{lem:signMotzkinTauPtau}
		The \emph{structural constant} $\tau$, which is the unique positive solution of $P'(u)=0$, is $\tau = \sqrt{\frac{p_{-1}}{p_1}}$. The \emph{structural radius} is $\rho = \frac{1}{P(\tau)} = \frac{1}{p_0 + 2 \sqrt{p_{-1}p_1}}$.
	\end{lemma}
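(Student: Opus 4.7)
The plan is to verify both claims by direct computation on the explicit Laurent polynomial $P(u) = p_{-1} u^{-1} + p_0 + p_1 u$. This is a short, essentially one-line argument, but I would structure it in two steps corresponding to the two assertions of the lemma.

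First, for the structural constant $\tau$, I would differentiate to obtain $P'(u) = -p_{-1} u^{-2} + p_1$ and solve $P'(u) = 0$, which immediately gives $u^2 = p_{-1}/p_1$ and therefore the unique positive solution $\tau = \sqrt{p_{-1}/p_1}$. To argue that this positive critical point is unique (and is a minimum, which is what is needed for the kernel method machinery), I would note that $P''(u) = 2 p_{-1} u^{-3} > 0$ for all $u > 0$, so $P$ is strictly convex on $(0,\infty)$; combined with $P(u) \to \infty$ as $u \to 0^+$ and $u \to \infty$, this forces $\tau$ to be the unique positive critical point and a global minimum of $P$ on $(0,\infty)$.

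Second, for the structural radius, I would substitute $\tau = \sqrt{p_{-1}/p_1}$ into $P$:
\begin{align*}
P(\tau) = \frac{p_{-1}}{\tau} + p_0 + p_1 \tau
       = p_{-1}\sqrt{\frac{p_1}{p_{-1}}} + p_0 + p_1 \sqrt{\frac{p_{-1}}{p_1}}
       = p_0 + 2\sqrt{p_{-1} p_1},
\end{align*}
which yields $\rho = 1/P(\tau) = 1/(p_0 + 2\sqrt{p_{-1} p_1})$ as claimed.

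There is essentially no obstacle here: the statement is a direct calculation on a three-term Laurent polynomial, and both formulas follow by arithmetic once convexity of $P$ on $(0,\infty)$ has been observed to guarantee uniqueness of $\tau$. The only subtlety worth flagging explicitly is that the positive Laurent polynomial $P$ has no finite critical point at the endpoints of $(0,\infty)$, so the saddle point $\tau$ is the only relevant one; this is exactly what makes the kernel method of Banderier--Flajolet apply and justifies the name ``structural radius'' for $\rho = 1/P(\tau)$.
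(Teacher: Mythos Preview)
Your proof is correct. The paper actually states this lemma without proof (it is an immediate computation on the three-term Laurent polynomial), so there is nothing to compare against; your direct verification via $P'(u)=-p_{-1}u^{-2}+p_1$ and the convexity argument for uniqueness is exactly the intended one-line justification.
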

	
	The theory of Newton-Puiseux series implies that the small branch $u_1(z)$ is analytic on the open interval $(0,\rho)$, and satisfies the singular expansion
	\begin{align}
		\label{eq:u1asy}
		u_1(z) &= \tau - C \sqrt{1-\frac{z}{\rho}} + \LandauO\left(1 - \frac{z}{\rho} \right),
	\end{align}
	for $z \to \rho^-$, where $C = \sqrt{2 \frac{P(\tau)}{P''(\tau)}}$. 
	This is a direct consequence of the implicit function theorem. 

\begin{prop}[Square-root singularity]
	\label{prop:decompu1}
	There exists a neighborhood $\neighrayrho$ such that for $z \to \rho$ in $\neighrayrho$ $u_1(z)$ has a local representation of the kind
	\begin{align*}
		u_1(z) &= a(z) + b(z) \sqrt{1-z/\rho}, \quad \text{ with } \quad 
		a(\rho)=\tau, \text{ and } b(\rho) = -C,
	\end{align*}
	where $a(z)$ and $b(z)$ are analytic functions for every point $z \in \neighrayrho$, $z \neq z_0$. 
\end{prop}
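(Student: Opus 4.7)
The plan is to obtain the decomposition directly from the closed form of $u_1(z)$ in Proposition~\ref{prop:rootsofthekernel} by factoring the discriminant. First, I would write
\[
	(1 - p_0 z)^2 - 4 p_{-1} p_1 z^2 = \bigl(1 - (p_0 + 2\sqrt{p_{-1} p_1})\, z\bigr)\bigl(1 - (p_0 - 2\sqrt{p_{-1} p_1})\, z\bigr) = \bigl(1 - z/\rho\bigr)\bigl(1 - z/\rho'\bigr),
\]
where $\rho = 1/(p_0 + 2\sqrt{p_{-1} p_1})$ is the structural radius from Lemma~\ref{lem:signMotzkinTauPtau} and $\rho' := 1/(p_0 - 2\sqrt{p_{-1} p_1})$ is the (possibly negative) second zero of the discriminant. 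Because $p_{-1}, p_1 > 0$ we have $\rho \neq \rho'$, so on a sufficiently small disk $\Omega$ around $\rho$ (avoiding $0$ and $\rho'$) the function $1 - z/\rho'$ is nonzero and admits a single-valued analytic square root.

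Second, I would substitute this factorisation into the formula of Proposition~\ref{prop:rootsofthekernel} and pull out the factor $\sqrt{1 - z/\rho}$, yielding the identification
\[
	a(z) := \frac{1 - p_0 z}{2 p_1 z}, \qquad b(z) := -\frac{\sqrt{1 - z/\rho'}}{2 p_1 z},
\]
which are both analytic on the slit neighbourhood $\neighrayrho$. The sign in front of the radical is forced by the defining property $u_1(0) = 0$ together with the principal-branch convention for $\sqrt{\cdot}$, which is positive on $(0, \rho)$ where both $1 - z/\rho$ and $1 - z/\rho'$ are positive.

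Third, I would check the boundary values. From $\rho^{-1} = p_0 + 2\sqrt{p_{-1} p_1}$ one gets $1 - p_0 \rho = 2 \sqrt{p_{-1} p_1}\, \rho$, hence $a(\rho) = \sqrt{p_{-1}/p_1} = \tau$. Likewise $1 - \rho/\rho' = 4\sqrt{p_{-1} p_1}\, \rho$, giving $b(\rho) = -(p_{-1} p_1)^{1/4}/(p_1 \sqrt{\rho})$. To match this against $-C = -\sqrt{2 P(\tau)/P''(\tau)}$, note that $P(\tau) = p_0 + 2\sqrt{p_{-1} p_1} = 1/\rho$ by definition of $\rho$, while $P''(u) = 2 p_{-1}/u^3$ together with $\tau = \sqrt{p_{-1}/p_1}$ gives $P''(\tau) = 2 p_1^{3/2}/p_{-1}^{1/2}$; substituting these into $C$ produces exactly $(p_{-1} p_1)^{1/4}/(p_1 \sqrt{\rho})$, as required.

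I do not expect a real obstacle here: the factoring trick reduces the whole statement to elementary algebra and to one branch-of-square-root decision, which is pinned down by $u_1(0) = 0$. The only point requiring a little care is ensuring that $\Omega$ is small enough to keep both $0$ and $\rho'$ out of the picture so that $a$, $b$, and the analytic continuation of $\sqrt{1 - z/\rho'}$ are simultaneously well defined.
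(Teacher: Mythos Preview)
Your proof is correct and is exactly the approach the paper intends: the paper's own proof is the single sentence ``This is a direct consequence of the explicit structure of $u_1(z)$ from Proposition~\ref{prop:rootsofthekernel},'' and you have faithfully expanded that sentence by factoring the discriminant and reading off $a(z)$ and $b(z)$. Your verification of $a(\rho)=\tau$ and $b(\rho)=-C$ is accurate, including the case distinction on the sign (or vanishing) of $p_0-2\sqrt{p_{-1}p_1}$ that governs where $\rho'$ lies.
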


\begin{proof}
	This is a direct consequence of the explicit structure of $u_1(z)$ from Proposition~\ref{prop:rootsofthekernel}.
\end{proof}

\section{Properties of Motzkin paths}
\label{sec:motzprop}

The following examples are motivated by the very nice presentation of Feller \cite[Chapter~III]{fell68} about one-dimensional symmetric, simple random walks. Therein, the discrete time stochastic process~$(S_n)_{n \geq 0}$ is defined by $S_0 = 0$ and $S_n = \sum_{j=1}^n X_j$, $n \geq 1$, where the $(X_i)_{i\geq 1}$ are iid Bernoulli random variables with $\PR[X_i = 1] = \PR[X_i = -1] = \frac{1}{2}$. 
These results are generalized to the case of Motzkin paths. In particular compare \cite[Problems~$9$-$10$]{fell68} and \cite[Remark of Barton]{SkSh57} for returns to zero of symmetric and asymmetric random walks, respectively. Furthermore, see \cite[Chapter~III.$5$]{fell68} for sign changes, and \cite[Chapter~III.$7$]{fell68} for the height. 
See also the recent paper of D\"obler \cite{dobl15} on Stein's method for this questions in which he derives bounds for the convergence rate in the Kolmogorov and the Wasserstein metric. 

Let us now analyze these properties in the case of Motzkin walks. 
For the sake of brevity we will only mention the weak convergence law. However, in all cases the local law and the asymptotic expansions for mean and variance hold as well.

\subsection{Returns to zero}

A \emph{return to zero} is a point of a walk of altitude $0$, except for the starting point; in other words a return to the $x$-axis, see Figure~\ref{fig:signchanges}. 
In order to count them we consider ``minimal'' bridges, in the sense that the bridges touch the $x$-axis only at the beginning and at the end. 
We call them \emph{arches}.
As a bridge is a sequence of such arches, we get their generating function in the form of
$
	A(z) = 1 - \frac{1}{B(z)}.
$

\begin{lemma}
	\label{lem:archdecomp}
	The generating function of arches $A(z)$ is for $z \to \rho$ of the kind
	\begin{align*}
		A(z) &= a(z) + b(z) \sqrt{1-z/\rho},
	\end{align*}
	where $a(z)$ and $b(z)$ are analytic functions in a neighborhood $\neighrayrho$ of $\rho$ (i.e.,~for $z \in \neighrayrho$ it holds that $z \notin (\rho,\infty)$).
\end{lemma}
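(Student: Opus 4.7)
The plan is to exploit the square-root decomposition of $u_1(z)$ given by Proposition~\ref{prop:decompu1} together with the identity $A(z) = 1 - 1/B(z)$ and the closed form $B(z) = z u_1'(z)/u_1(z)$ from Table~\ref{tab:dirTypes}. Let $s = \sqrt{1-z/\rho}$ and consider the ring
\[
	\Rc = \{\, f(z) + g(z)\, s : f, g \text{ analytic in } \neighrayrho \,\},
\]
which is closed under addition and multiplication. It is also closed under inversion wherever the ``norm'' $f(z)^2 - g(z)^2(1-z/\rho)$ does not vanish, via the usual conjugate trick $(f+gs)^{-1} = (f-gs)/(f^2 - g^2(1-z/\rho))$. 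The whole proof reduces to placing $A(z)$ inside $\Rc$.

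First I would write $u_1 = \alpha(z) + \beta(z)\, s$ with $\alpha, \beta$ analytic, $\alpha(\rho)=\tau>0$ and $\beta(\rho)=-C\neq 0$, so $u_1 \in \Rc$ with $u_1(\rho)=\tau\neq 0$. Differentiating yields
\[
	u_1'(z) = \alpha'(z) + \beta'(z)\, s - \frac{\beta(z)}{2\rho\, s},
\]
so $u_1'$ itself is not in $\Rc$, but the product $s\cdot u_1'(z) = -\beta(z)/(2\rho) + \beta'(z)(1 - z/\rho) + \alpha'(z)\, s$ clearly belongs to $\Rc$, and evaluating at $z=\rho$ gives $-\beta(\rho)/(2\rho) = C/(2\rho)\neq 0$.

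Next I form $s\cdot B(z) = z\, (s\, u_1'(z))/u_1(z)$. Both numerator and denominator lie in $\Rc$, and the denominator is nonzero at $\rho$, so closure under inversion gives $s\cdot B(z) \in \Rc$ with value $C/(2\tau)\neq 0$ at $z=\rho$. Writing $s\cdot B(z) = \phi(z) + \psi(z)\, s$ with $\phi, \psi$ analytic and $\phi(\rho) = C/(2\tau)\neq 0$, I conclude $B(z) = \phi(z)/s + \psi(z)$. Inverting,
\[
	\frac{1}{B(z)} = \frac{s}{\phi(z) + \psi(z)\, s},
\]
and applying the conjugate trick one more time (the norm at $\rho$ is $\phi(\rho)^2 \neq 0$) places $1/B(z)$ in $\Rc$. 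Therefore $A(z) = 1 - 1/B(z)$ is of the asserted form $a(z) + b(z)\sqrt{1-z/\rho}$ on the slit neighborhood $\neighrayrho$.

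The only delicate point is bookkeeping the non-vanishing of the two norms that appear during the two applications of the conjugate trick; both reduce at $z=\rho$ to $\tau^2$ and $\phi(\rho)^2$ respectively, which are manifestly nonzero from Lemma~\ref{lem:signMotzkinTauPtau} together with $\beta(\rho)=-C\neq 0$. There is no further obstacle, since all algebraic manipulations are intrinsic to the ring $\Rc$.
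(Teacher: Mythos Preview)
Your argument is correct and follows the same route as the paper's proof: both start from the explicit formula $B(z)=z\,u_1'(z)/u_1(z)$ and Proposition~\ref{prop:decompu1}, deduce that $B(z)=C/(2\tau)\,(1-z/\rho)^{-1/2}+\LandauO(1)$, and conclude the square-root decomposition of $A(z)=1-1/B(z)$. The paper compresses the last step into a single sentence (``Proposition~\ref{prop:decompu1} together with~\eqref{eq:signBtau1} implies the desired decomposition''), whereas you make the underlying algebra explicit by working in the ring $\Rc=\{f+g\sqrt{1-z/\rho}\}$ and tracking the two nonzero norms needed for inversion. Your version is more self-contained; the paper's is terser but relies on the reader supplying exactly the closure properties you spell out.
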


\begin{proof}
	We know that $B(z) = z \frac{u_1'(z)}{u_1(z)}$ is analytic for $|z| < \rho$, see \cite[Theorem 3]{bafl02}. Due to $p_0>0$ (aperiodicity) $\rho$ is the only singular point on the circle of convergence. Hence, 
	\begin{align}
		\label{eq:signBtau1}
		B(z) &= \frac{C_1}{\sqrt{1-z/\rho}} + \LandauO(1), \qquad C_1 = \frac{C}{2 \tau},
	\end{align} 
	by \eqref{eq:u1asy} for $z \to \rho$. Proposition \ref{prop:decompu1} together with \eqref{eq:signBtau1} implies the desired decomposition.
\end{proof}

Here, we are interested in the number of returns to zero of walks which are unconstrained by definition. Every walk can be decomposed into a maximal initial bridge, and a walk that never returns to the $x$-axis%
, see Figure~\ref{fig:retzerotail}
. Let us denote the generating function of this \emph{tail} by $T(z)$. 

\begin{figure}[ht]
	\begin{center}	
		\includegraphics[width=0.7\textwidth]{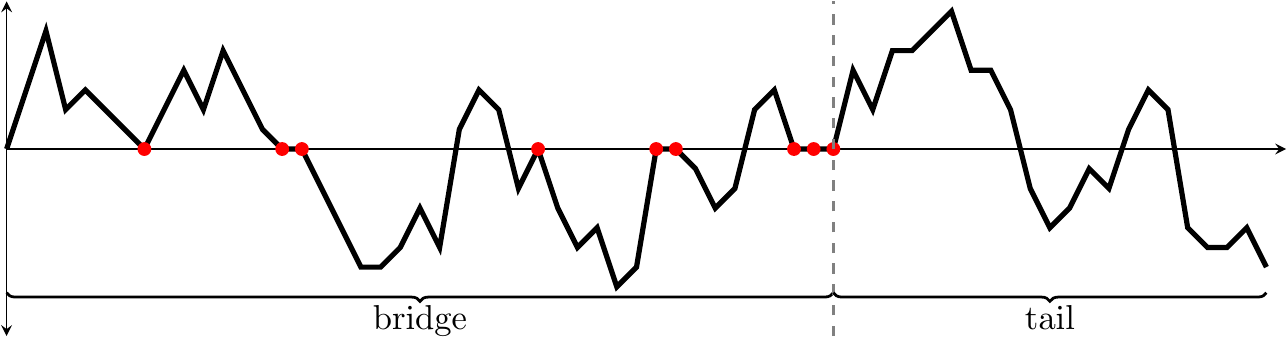}
		\caption{A walk with $9$ returns to zero decomposed into a bridge and a tail.}
		\label{fig:retzerotail}
	\end{center}
\end{figure}

As we want to count the number of returns to zero, we mark each arch by an additional parameter $u$ and reconstruct the generating function of walks. This gives 
\begin{align*}
	W(z,u) &= \frac{1}{1 - u A(z)} T(z) = \frac{W(z)}{u + (1-u)B(z)}, && \text{ with } &
	T(z) &= \frac{W(z)}{B(z)}.
\end{align*}

Let us define the random variable $X_n$ which stands for the number of returns to zero of a random meander of length $n$. Thus,
$
	\PR[X_n = k] = \frac{[u^k z^n] W(z,u)}{[z^n] W(z,1)}. 
$

\begin{theo}[Limit law for returns to zero]
	\label{theo:mainRetZero}
	Let $X_n$ denote the number of returns to zero of a walk of length $n$. Let $\delta=P'(1)$ be the drift. 
	\begin{enumerate} 
		\item For $\delta \neq 0$ we get convergence to a geometric distribution:
			\begin{align*}
				X_n \stackrel{d}{\to} \operatorname{Geom}\left(\frac{|p_1 - p_{-1}|}{P(1)}\right).
			\end{align*}
		\item For $\delta = 0$ we get convergence to a half-normal distribution:
			\begin{align*}
				\frac{X_n}{\sqrt{n}} \stackrel{d}{\to} \Hc\left(\sqrt{\frac{P(1)}{P''(1)}}\right).
			\end{align*}
	\end{enumerate}
\end{theo}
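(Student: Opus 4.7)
The plan is to analyze
\[
W(z,u) \;=\; \frac{1}{(1-zP(1))\,(u+(1-u)B(z))}
\]
by identifying its dominant singularity as a function of the drift $\delta=p_1-p_{-1}$, and in each regime invoking the appropriate transfer theorem. The two candidate singularities are the simple pole at $z_0=1/P(1)$ and the square-root singularity of $B(z)$ at $\rho=1/P(\tau)$. Since $\tau=\sqrt{p_{-1}/p_1}$ is the unique positive minimizer of $P$, one has $z_0<\rho$ iff $\delta\neq 0$, while $z_0=\rho$ iff $\delta=0$; this is exactly the dichotomy of the theorem. The factor $u+(1-u)B(z)$ equals $1$ at $u=1$ and introduces no additional singularities for $u$ in a small disc around $1$.

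\emph{Nonzero drift.} Here $B$ is analytic at the pole $z_0<\rho$, so standard meromorphic singularity analysis yields, uniformly for $u$ in a small neighborhood of $1$,
\[
[z^n]\,W(z,u) \;=\; \frac{P(1)^n}{u+(1-u)\beta}\,(1+o(1)), \qquad \beta := B(z_0).
\]
Dividing by $[z^n]W(z,1)=P(1)^n$ and reading off $[u^k]$ of $1/(\beta-(\beta-1)u)$ gives $\PR[X_n=k]\to (1/\beta)(1-1/\beta)^k$, a geometric law with parameter $1/\beta$. To identify $\beta$ I would differentiate the kernel identity $zP(u_1(z))=1$ to write $B(z)=-P(u_1(z))/(u_1(z)\,P'(u_1(z)))$, then evaluate at $z_0$ using that $u_1(z_0)=\min(1,\tau^2)$ is the smaller root of $P(u)=P(1)$; in both sign regimes the resulting value is $\beta=P(1)/|p_1-p_{-1}|$.

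\emph{Zero drift.} Now $z_0=\rho$, the two singularities coalesce, $1-zP(1)=1-z/\rho$, and the plan is to cast $1/W(z,u)$ in the form of [H'] and invoke Theorem~\ref{theo:theo4}. From Lemma~\ref{lem:archdecomp}, writing $A(z)=a(z)+b(z)\sqrt{1-z/\rho}$ and using $a(\rho)=1$, $b(\rho)=-1/C_1$ (both inherited from $u_1(z)$), the combination $(1-z/\rho)B(z)=(1-z/\rho)/(1-A(z))$ admits near $z=\rho$ an expansion $\phi(z)\sqrt{1-z/\rho}+\psi(z)(1-z/\rho)$ with $\phi,\psi$ analytic and $\phi(\rho)=C_1\neq0$. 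Reading off $1/W(z,u)=g(z,u)+h(z,u)\sqrt{1-z/\rho}$ gives
\[
g(z,u)=\bigl[u+(1-u)\psi(z)\bigr](1-z/\rho), \qquad h(z,u)=(1-u)\phi(z).
\]
The four vanishing conditions $g(\rho,1)=g_u(\rho,1)=g_{uu}(\rho,1)=h(\rho,1)=0$ then hold immediately from the factor $(1-z/\rho)$ in $g$, the linearity of $g$ in $u$, and the factor $(1-u)$ in $h$; the nondegeneracy conditions read $g_z(\rho,1)=-1/\rho$ and $h_u(\rho,1)=-C_1$; and analytic continuation beyond $|z|=\rho$ follows from aperiodicity ($p_0>0$), which makes $\rho$ the unique singularity on the circle of convergence. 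Theorem~\ref{theo:theo4} then yields the half-normal limit with
\[
\sigma \;=\; \sqrt{2}\,\frac{h_u(\rho,1)}{\rho\,g_z(\rho,1)} \;=\; \sqrt{2}\,C_1 \;=\; \sqrt{P(1)/P''(1)},
\]
after simplifying via $\tau=1$, $C=\sqrt{2P(1)/P''(1)}$, and $P''(1)=2p_{-1}$.

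\emph{Main obstacle.} The nontrivial bookkeeping lies entirely in the zero-drift case: one must verify that multiplying $B(z)$ by the vanishing factor $1-z/\rho$ produces precisely the $(g,h)$-decomposition required by Theorem~\ref{theo:theo4}, with all three vanishing derivatives of $g$ at $(\rho,1)$ holding simultaneously. This rests on the two structural facts $a(\rho)=1$ (which absorbs one power of $1-z/\rho$ in $1-A(z)$) and $b(\rho)\neq 0$ (which permits a clean geometric inversion); the nonzero-drift case is by contrast an elementary meromorphic computation.
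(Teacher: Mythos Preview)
Your proposal is correct and follows essentially the same approach as the paper: the same dichotomy on the drift via comparing $1/P(1)$ with $\rho$, the same meromorphic pole analysis for $\delta\neq 0$ (where the paper only alludes to ``tedious calculations with the help of relations implied by the kernel equation,'' you explicitly identify $u_1(1/P(1))=\min(1,\tau^2)$ and evaluate $B(1/P(1))=P(1)/|p_1-p_{-1}|$), and for $\delta=0$ the same invocation of Theorem~\ref{theo:theo4} via the arch decomposition of Lemma~\ref{lem:archdecomp}, arriving at the same values $g_z(\rho,1)=-1/\rho=-P(1)$ and $h_u(\rho,1)=-C_1=-\sqrt{P(1)/(2P''(1))}$.
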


\begin{proof}
	First of all, we see that $[z^n] W(z,1) = [z^n]W(z) = P(1)^n$. 
	Note that because of $p_0>0$ (aperiodicity) $B(z)$ is singular only at $\rho$. Obviously, $W(z)$ is singular at $\rho_1 := \frac{1}{P(1)}$. 
	
	Note that $P(\tau)$ is the unique minimum of $P(u)$ on the positive real axis. Hence, only two cases are possible: $\rho_1 < \rho$, if $\tau \neq 1$; or $\rho_1 = \rho$, if $\tau = 1$. These cases are equivalent to $\delta \neq 0$ and $\delta = 0$, respectively. In the first case $W(z)$ is responsible for the dominant singularity. Then we get ($B(z)$ is analytic for $|z|<\rho$)
	\begin{align*}
		[z^n] W(z,u) &= \frac{1}{B\left(\rho_1\right)} \frac{P(1)^n}{1 - u\left(1-\frac{1}{B\left(\rho_1\right)}\right)} + \Landauo(P(1)^n).
	\end{align*}
	Thus, the limit distribution is a geometric distribution with parameter $\lambda = \frac{1}{B\left(\rho_1\right)}$. Distinguishing between a positive and a negative drift, and some tedious calculations with the help of relations implied by the kernel equation, give the final result for $\delta \neq 0$. 

	In the second case $\tau=1$ or $\delta=0$, we apply Theorem \ref{theo:theo4}. By Lemma \ref{lem:archdecomp} it holds that $1/W(z,u)$ has a decomposition of the kind \eqref{eq:decompFinv}.
	In particular, from \eqref{eq:signBtau1} we directly get that
	\begin{align*}
		\frac{1}{W(z,u)} &= \left(1-\frac{z}{\rho} \right)u + \frac{C}{2}(1-u) \sqrt{1-\frac{z}{\rho}} + \LandauO\left( \left(1-\frac{z}{\rho} \right) (1-u) \right), 
	\end{align*}
	for $z \to \rho$ and $u \to 1$, with $g(\rho,1)=h(\rho,1)=g_u(\rho,1)=g_{uu}(\rho,1)=0$; and $g_z(\rho,1)=-P(1)$ and $h_u(\rho,1) = - \sqrt{\frac{P(1)}{2 P''(1)}}$. Hence, Theorem \ref{theo:theo4} yields the result.
\end{proof}

\subsection{Sign changes of Motzkin walks}

We say that nodes which are strictly above the $x$-axis have a \emph{positive sign} denoted by ``$+$'', whereas nodes which are strictly below the $x$-axis have a \emph{negative sign} denoted by ``$-$'', and nodes on the $x$-axis are \emph{neutral} denoted by ``$0$''. 
This notion easily transforms a walk $\omega = (\omega_n)_{n \geq 0}$ into a sequence of signs. In such a sequence a \emph{sign change} is defined by either the pattern~$+(0)-$ or~$-(0)+$, where~$(0)$ denotes a non-empty sequence of $0$'s, see Figure \ref{fig:signchanges}.

\begin{figure}[ht]
	\begin{center}	
		\includegraphics[width=0.8\textwidth]{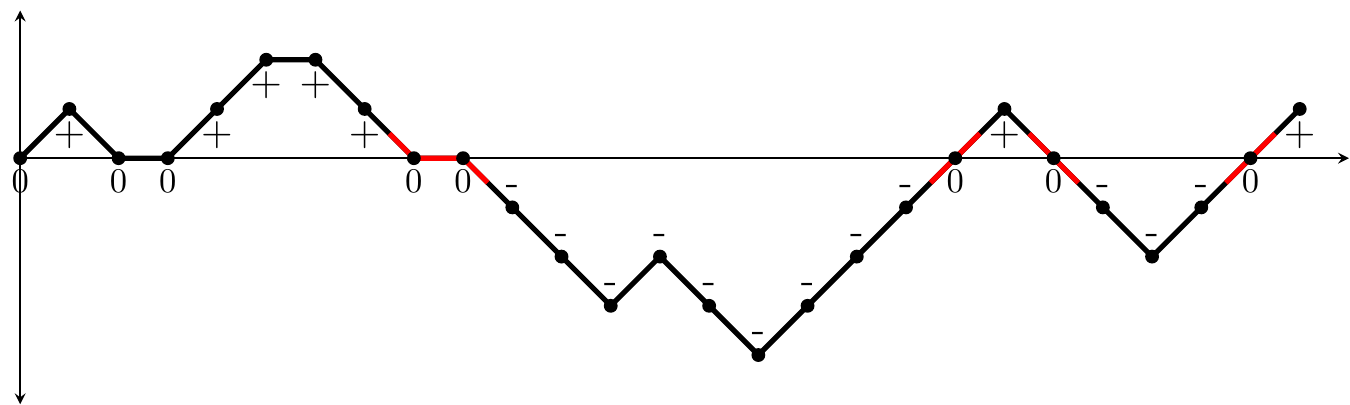}
		\caption{A Motzkin walk with $7$ returns to zero and $4$ sign changes. The positive, neutral or negative signs of the walks are indicated by $+,0$, or $-$, respectively.}
		\label{fig:signchanges}
	\end{center}
\end{figure}

The main observation in this context is the non-emptiness of the sequence of $0$'s.
Geometrically this means that it has to touch the $x$-axis when passing through it. This means that we can count the number of sign changes by counting the number of maximal parts above or below the $x$-axis. 
The idea is to decompose a walk into an alternating sequence of positive (above the $x$-axis) and negative (below) excursions terminated by a positive or negative meander. 

We introduce two new terms: \emph{positive excursions} are ``traditional'' excursions, i.e.,~they are required to stay above the $x$-axis, whereas \emph{negative excursions} are walks which start at zero, end on the $x$-axis, but are required to stay below the $x$-axis. 

\begin{lemma}
	\label{lem:posnegexc}
	Among all walks of length $n$, the number of positive excursions is equal to the number of negative excursions. 
\end{lemma}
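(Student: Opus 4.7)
The natural idea is to exhibit an involutive bijection between positive and negative excursions via reflection across the $x$-axis. Concretely, given a walk $\omega = (\omega_0,\omega_1,\ldots,\omega_n)$ with $\omega_0 = \omega_n = 0$, define $\phi(\omega) = (-\omega_0,-\omega_1,\ldots,-\omega_n)$. Geometrically, $\phi$ swaps every $+1$ step with a $-1$ step and fixes every $0$ step. Since $\omega$ stays weakly above the $x$-axis if and only if $\phi(\omega)$ stays weakly below, and the length is preserved, $\phi$ restricts to a bijection between positive excursions of length $n$ and negative excursions of length $n$.

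The point that requires a small justification is that weights are preserved. An excursion of length $n$ starts and ends at altitude $0$, so the number of $+1$ steps equals the number of $-1$ steps, say $k$, and the number of $0$ steps is $n-2k$. Hence the weight $p_1^k p_{-1}^k p_0^{n-2k}$ of any excursion is symmetric in $p_1$ and $p_{-1}$. Since $\phi$ preserves the multiset of step types up to the swap $(+1) \leftrightarrow (-1)$, it preserves the weight. Summing over all excursions of length $n$ yields equality of the weighted counts.

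As a sanity check one can also verify this on the level of generating functions. Denote by $\tilde{E}(z)$ the generating function of negative excursions, obtained from the formula $E(z) = u_1(z)/(p_{-1} z)$ in Table~\ref{tab:dirTypes} by swapping $p_1 \leftrightarrow p_{-1}$. Using the explicit form of $u_1$ in Proposition~\ref{prop:rootsofthekernel}, the radicand $(1-p_0 z)^2 - 4 p_{-1} p_1 z^2$ is itself symmetric in $p_1$ and $p_{-1}$, so the swapped small root $\tilde{u}_1(z)$ satisfies $\tilde{u}_1(z) = \frac{p_1}{p_{-1}} u_1(z)$. Substituting gives $\tilde{E}(z) = \tilde{u}_1(z)/(p_1 z) = u_1(z)/(p_{-1} z) = E(z)$, confirming the bijective argument.

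There is no real obstacle here; the only thing to watch is the weight-preservation step, which relies on the combinatorial identity that any excursion has equal numbers of $+1$ and $-1$ steps. This lemma will then be used in the subsequent decomposition of a walk into an alternating sequence of positive and negative excursions for the analysis of sign changes.
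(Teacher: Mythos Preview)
Your proof is correct and uses essentially the same idea as the paper, which simply states that mirroring bijectively maps positive excursions to negative ones. Your added remarks on weight preservation (equal numbers of $+1$ and $-1$ steps) and the generating-function sanity check are sound elaborations, but the core argument is the same reflection bijection.
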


\begin{proof}
	Mirroring bijectively maps positive excursions to negative ones. 
\end{proof}

We define the bivariate generating function 
$
	B(z,u) = b_{n,k}z^n u^k,
$
where $b_{n,k}$ denotes the number of bridges of size $n$ having $k$ sign changes. Furthermore, we define 
\begin{align*}
	C(z) = \frac{1}{1-p_0 z},
\end{align*}
as the generating function of \emph{chains}, which are walks constructed solely from the jumps of height~$0$. Then the generating function of excursions starting with a $+1$ jump is given by
\begin{align*}
	E_1(z) &= \frac{E(z)}{C(z)}-1,
\end{align*}
because we need to exclude all excursions which start with a chain or are a chain. 
Due to Lemma~\ref{lem:posnegexc} this is also the generating function for excursions starting with a $-1$ jump.

\begin{theo}
	\label{theo:signbridgedecomp}
	The bivariate generating function of bridges 
	(where $z$ marks the length, and $u$ marks the number of sign changes of the walk) 
	is given by
	\begin{align*}
		B(z,u) &= C(z) \left( 1 + \frac{2 E_1(z)}{1-uE_1(z)} \right).
	\end{align*}
\end{theo}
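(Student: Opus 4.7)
My plan is to establish the identity via a unique arch decomposition of bridges, with sign changes tracked combinatorially. Let $A^+(z)$ denote the generating function of a primitive positive arch, i.e., an excursion that is strictly above the $x$-axis in its interior (so it starts with a $+1$ step, ends with a $-1$ step, and touches the $x$-axis only at its two endpoints). By the mirror bijection used in the proof of Lemma~\ref{lem:posnegexc}, the generating function $A^-(z)$ of primitive negative arches coincides with $A^+(z)$.

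First I would observe that splitting a bridge at each of its $x$-axis touching points yields a unique sequence of atoms, each being either a single $0$-step, a positive arch, or a negative arch. Grouping the maximal (possibly empty) runs of $0$-steps into chains of generating function $C(z)$ then gives the unique factorisation
\begin{align*}
 \text{bridge} \;=\; C_0 \cdot A_1 \cdot C_1 \cdot A_2 \cdots A_m \cdot C_m,
\end{align*}
with $m \geq 0$ arches and each $A_i \in \{A^+, A^-\}$. Primitivity of the arches is essential here: without it, trailing $0$-steps of one arch could be ambiguously reassigned to the neighbouring chain, breaking uniqueness.

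Next I would count sign changes. Since chains contribute only neutral signs, the sign-change count equals the number of indices $i$ for which $A_i$ and $A_{i+1}$ carry opposite signs. Choosing the sign of the first arch (two ways) and then deciding at each of the $m-1$ transitions whether to keep the sign (weight $1$) or flip it (weight $u$) produces the factor $2(1+u)^{m-1}$ for $m\geq 1$. Combining with the chain and arch generating functions and summing the resulting geometric series gives
\begin{align*}
 B(z,u) \;=\; C \;+\; \sum_{m\geq 1} 2\, C^{m+1} (A^+)^m (1+u)^{m-1} \;=\; C \;+\; \frac{2\, C^2 A^+}{1 - C A^+(1+u)}.
\end{align*}

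Finally I would match this against the claimed formula. The sequence decomposition of excursions over the atoms $\{0\text{-step},\, A^+\}$ yields $E(z) = C/(1-CA^+)$, so $E_1 = E/C - 1 = CA^+/(1-CA^+)$. A short manipulation of $2CE_1/(1-uE_1)$ then gives $2C^2 A^+/(1-CA^+(1+u))$, so $C\bigl(1 + 2E_1/(1-uE_1)\bigr)$ agrees with the expression above. The only subtle step is securing the uniqueness of the arch decomposition; once that is in hand, the remainder is a routine evaluation of geometric series.
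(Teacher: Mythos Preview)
Your proof is correct, but it proceeds via a different decomposition than the paper's.

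The paper cuts a non-chain bridge only at the sign changes themselves, so each block is a \emph{maximal} one-sided excursion: the first block is any excursion that is not a pure chain (generating function $E(z)-C(z)=C(z)E_1(z)$), and every subsequent block is an excursion that begins with a non-zero step (generating function $E_1(z)$). Since consecutive blocks necessarily have opposite sign, every internal boundary is a sign change and is marked by $u$, giving
\[
B(z,u)=C(z)+\frac{2\,C(z)E_1(z)}{1-uE_1(z)}
\]
directly.

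You instead decompose all the way down to primitive arches separated by chains, so consecutive arches may or may not flip sign; this forces the $(1+u)$ factor and then an algebraic identification of $E_1$ in terms of $C$ and $A^+$ to recover the stated form. Both arguments are sound. The paper's coarser cut is shorter because the combinatorics already isolates the sign changes; your finer cut is a bit more work but has the virtue of making the relation $E_1 = CA^+/(1-CA^+)$ explicit and of resting only on the elementary first-return (arch) decomposition.
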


\begin{proof}
	A bridge is either a chain, which has zero sign changes, or it is not a chain. In the latter it is an alternating sequence of positive and negative excursions, starting with either of them. We decompose it uniquely into such excursions, by requiring that all except the first one start with a non-zero jump. Therefore the first excursion is counted by $E(z)-C(z)$, whereas all others are counted by $E_1(z)$. The decomposition is shown in Figure~\ref{fig:signdecomp}.
\end{proof}

\begin{figure}[ht]
	\begin{center}	
		\includegraphics[width=0.9\textwidth]{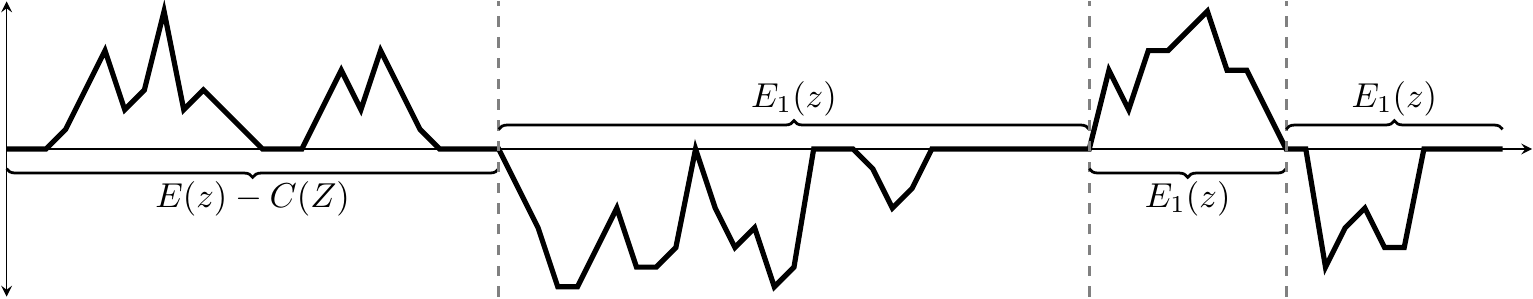}
		\caption{A bridge is an alternating sequence of positive and negative excursions. Here, it 
		starts with a positive excursion, followed by excursions starting with a non-zero jump.}
		\label{fig:signdecomp}
	\end{center}
\end{figure}

Let $X_n$ be the random variable for the number of sign changes of a random bridge of length $n$. 
Thus,
$
	\PR[X_n = k] = \frac{[u^k z^n] B(z,u)}{[z^n] B(z,1)}. 
$

\begin{theo}[Limit law for returns to zero for bridges]
	\label{theo:signbridges}
	Let $X_n$ denote the number of sign changes of a Motzkin bridge of length $n$. Then for $n \to \infty$ the normalized random variable has a Rayleigh%
		\footnote{The parameter $\lambda = \sigma^{-2}$ was used in \cite[Theorem 1]{DrSo97}. 
			}
		limit distribution
	\begin{align*}
		\frac{X_n}{\sqrt{n}} \stackrel{d}{\to} \Rc\left(\sigma\right) \qquad \text{ and } \qquad
		\sigma = \frac{\tau}{2} \sqrt{\frac{P''(\tau)}{P(\tau)}},
	\end{align*}
	where $\tau = \sqrt{\frac{p_{-1}}{p_1}}$ and $\Rc(\sigma)$ has the density $\frac{x}{\sigma^2} \exp\left(-\frac{x^2}{2\sigma^2}\right)$ for $x \geq 0$. 
\end{theo}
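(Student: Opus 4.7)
The plan is to apply the Rayleigh limit theorem \cite[Theorem~1]{DrSo97} to $B(z,u)$. Starting from Theorem~\ref{theo:signbridgedecomp}, I would combine the two summands into
\[
B(z,u) = C(z)\,\frac{1+(2-u)E_1(z)}{1-u\,E_1(z)},
\qquad
\frac{1}{B(z,u)} = \frac{1-u\,E_1(z)}{C(z)\bigl(1+(2-u)E_1(z)\bigr)},
\]
and then aim to show that $1/B(z,u)$ admits the decomposition $g(z,u)+h(z,u)\sqrt{1-z/\rho}$ of hypothesis~[H], with $g(\rho,1)=0$, $g_u(\rho,1)\neq 0$, and $h(\rho,1)>0$, these being the defining conditions of the Rayleigh regime.

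The preliminary identity I would establish is $E_1(\rho)=1$: using $u_1(\rho)=\tau$ one gets $E(\rho)=\tau/(p_{-1}\rho)$, and combining $1/C(\rho)=1-p_0\rho=2\sqrt{p_{-1}p_1}\,\rho$ with $\tau=\sqrt{p_{-1}/p_1}$ yields $E(\rho)/C(\rho)=2$. Consequently $1-u\,E_1(z)$ vanishes at $(z,u)=(\rho,1)$, while $1+(2-u)\,E_1(z)$ equals $3$ there; since $E_1(0)=0$ the latter factor also equals $1$ at the origin, hence is non-vanishing in a joint neighborhood of $(\rho,1)$ on $|z|\le\rho$. Together with Proposition~\ref{prop:decompu1} and aperiodicity $p_0>0$, this provides the analytic continuation required by~[H].

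Next I would insert the singular expansion~\eqref{eq:u1asy} into $E(z)=u_1(z)/(p_{-1}z)$ to obtain $E_1(z)=1+\alpha\sqrt{1-z/\rho}+\LandauO(1-z/\rho)$ with $\alpha=-2C/\tau$. Substituting into $1/B(z,u)$ and collecting powers of $\sqrt{1-z/\rho}$ reads off
\[
g(\rho,1)=0,\qquad
g_u(\rho,1)=-\frac{1}{2\,C(\rho)}\neq 0,\qquad
h(\rho,1)=\frac{C}{\tau\,C(\rho)}>0,
\]
so that~\cite[Theorem~1]{DrSo97} applies and produces the Rayleigh limit $X_n/\sqrt{n}\stackrel{d}{\to}\Rc(\sigma)$.

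The last step is the explicit identification of $\sigma$. Standard singularity analysis gives $\E[X_n]\sim \bigl(-g_u(\rho,1)/h(\rho,1)\bigr)\sqrt{\pi n}=(\tau/(2C))\sqrt{\pi n}$, and matching this against the Rayleigh mean $\sigma\sqrt{\pi/2}\cdot\sqrt{n}$ forces $\sigma=\tau/(C\sqrt{2})$; substituting $C=\sqrt{2P(\tau)/P''(\tau)}$ then collapses this to the stated $\sigma=(\tau/2)\sqrt{P''(\tau)/P(\tau)}$. I expect the main obstacle to be the bookkeeping of the $\LandauO(1-z/\rho)$ remainders needed to verify that $g$ and $h$ are jointly analytic in a full neighborhood of $(\rho,1)$, so that \cite[Theorem~1]{DrSo97} can be invoked directly without additional work.
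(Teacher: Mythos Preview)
Your approach is essentially the same as the paper's: apply \cite[Theorem~1]{DrSo97} after showing that $1/B(z,u)$ inherits a decomposition of the form~\eqref{eq:decompFinv} from the square-root structure of $u_1(z)$ (Proposition~\ref{prop:decompu1}), and then check the Rayleigh conditions $g(\rho,1)=0$, $g_u(\rho,1)\neq 0$, $h(\rho,1)>0$. The paper's proof is only a two-line sketch; you have carried out the verification explicitly, including the identification $E_1(\rho)=1$ and the computation of $\sigma$ via mean-matching, all of which is correct.

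One inconsequential slip: at $(\rho,1)$ the factor $1+(2-u)E_1(z)$ equals $2$, not $3$ (since $(2-1)\cdot 1=1$). This does not affect anything, as the only thing you need is that it is non-zero.
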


\begin{proof}[(Sketch)]
	We apply the first limit theorem of Drmota and Soria, \cite[Theorem 1]{DrSo97}. 
	Proposition~\ref{prop:decompu1} implies that $E_1(z)$ and therefore $B(z,u)$ has a decomposition of the desired form~\eqref{eq:decompFinv}.
	Checking the other conditions with the help of Lemma~\ref{lem:signMotzkinTauPtau} yields the result.
\end{proof}

Finally, we consider sign changes of walks. 
Since we want to count the number of sign changes we need to know whether a bridge ended with a positive or negative sign. 
Let \emph{positive bridges} be bridges whose last non-zero signed node was positive, and \emph{negative bridges} be bridges whose last non-zero signed node was negative. Their generating functions are denoted by $B_+(z,u)$ and $B_-(z,u)$, respectively. Figure~\ref{fig:signdecomp} shows a negative bridge.

\begin{lemma}
	\label{lem:B+}
	The number of positive and negative bridges is the same and given by
	\begin{align*}
		B_+(z,u) = \frac{B(z,u)-C(z)}{2} = \frac{E(z)-C(z)}{1-uE_1(z)}.
	\end{align*}
\end{lemma}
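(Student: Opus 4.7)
The plan is to exploit the mirror symmetry between positive and negative bridges, combined with the decomposition from Theorem~\ref{theo:signbridgedecomp}.

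First, I would argue that $B_+(z,u) = B_-(z,u)$. Consider the involution on Motzkin bridges that sends each step of height $+1$ to one of height $-1$ and vice versa (while fixing the height-$0$ steps). This map preserves the length, transforms any positive bridge into a negative bridge (and conversely), and preserves the number of sign changes since the pattern $+(0)-$ gets swapped with $-(0)+$. Hence the bijection is joint-weight-preserving with respect to $z$ and $u$, giving $B_+(z,u) = B_-(z,u)$.

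Second, I would partition bridges according to the sign of their last non-zero node: a bridge is either a chain (no non-zero signed node at all, counted by $C(z)$ with zero sign changes), a positive bridge, or a negative bridge. This yields
\begin{align*}
	B(z,u) &= C(z) + B_+(z,u) + B_-(z,u),
\end{align*}
and combining with the symmetry $B_+ = B_-$ delivers the first equality $B_+(z,u) = (B(z,u)-C(z))/2$.

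Third, I would substitute the formula from Theorem~\ref{theo:signbridgedecomp} to compute
\begin{align*}
	\frac{B(z,u) - C(z)}{2} &= \frac{C(z)\, E_1(z)}{1 - u E_1(z)}.
\end{align*}
Since $E_1(z) = E(z)/C(z) - 1$ by definition, we have $C(z) E_1(z) = E(z) - C(z)$, and substituting into the numerator gives the claimed right-hand side $\frac{E(z)-C(z)}{1 - uE_1(z)}$.

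I do not anticipate any real obstacle: the only thing to verify carefully is that the mirror involution preserves the number of sign changes, which is essentially immediate from the definition of a sign change as a $+(0)-$ or $-(0)+$ pattern, both of which are interchanged by flipping the sign of every non-zero step.
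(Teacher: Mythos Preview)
Your argument is correct. The mirror involution does preserve the number of sign changes, the partition $B = C + B_+ + B_-$ is valid, and the algebra in the third step is clean.

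The paper takes a slightly different route to the closed form. Rather than partitioning $B$ and then substituting the formula of Theorem~\ref{theo:signbridgedecomp}, it argues directly that a positive bridge is either a single non-trivial (positive) excursion, counted by $E(z)-C(z)$, or a negative bridge followed by one more excursion starting with a $+1$ jump, counted by $u\,B_-(z,u)\,E_1(z)$; the symmetric statement holds for negative bridges. This pair of equations immediately yields both $B_+ = B_-$ and $B_+ = (E-C)/(1-uE_1)$ without invoking Theorem~\ref{theo:signbridgedecomp}. Your approach is just as valid and arguably more transparent for the first equality; the paper's recursive decomposition has the small advantage of giving the closed form combinatorially rather than by algebraic back-substitution.
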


\begin{proof}
	The result is a direct consequence of Lemma \ref{lem:posnegexc}, because a positive bridge is either a non trivial excursion or a negative bridge where an additional excursion starting with a $+1$ jump was appended. For negative bridges an analogous construction holds. 
\end{proof}

\begin{prop}
	The bivariate generating function of walks $W(z,u) = \sum_{n,k \geq 0} w_{nk} z^n u^k$ where $w_{nk}$ is the number of all walks of length $n$ with $k$ sign changes, is given by
	\begin{align*}
		W(z,u) &= B(z,u) \frac{W(z)}{B(z)} + B_+(z,u) \left(\frac{W(z)}{B(z)} - 1 \right) (u-1),
	\end{align*}
	where $W(z) = \frac{1}{1-zP(1)}$ is the generating function of walks.
\end{prop}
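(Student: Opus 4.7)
The identity should follow from the ``first return to zero'' decomposition of an arbitrary walk into its maximal initial bridge followed by a \emph{tail}, i.e.~the remainder of the walk which, if non-empty, stays strictly on one side of the $x$-axis. From the preceding subsection, the generating function of tails is $T(z) := W(z)/B(z)$, and by the mirroring argument of Lemma~\ref{lem:posnegexc}, strictly positive and strictly negative non-empty tails have the same generating function $T_+(z) = T_-(z) = \tfrac{1}{2}(T(z)-1)$, the $1$ accounting for the possibly empty tail.

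\textbf{Enumeration of cases.} The key observation is that a new sign change is produced at the junction between the bridge and a non-empty tail if and only if the last non-zero sign of the bridge is opposite to the sign of the tail. I would partition bridges into the three disjoint classes of pure chains (generating function $C(z)$, no sign changes, no sign at the end), positive bridges $B_+(z,u)$ and negative bridges $B_-(z,u) = B_+(z,u)$. Marking the additional sign change by a factor $u$ in the two ``mismatched'' cases yields
\begin{align*}
W(z,u) = C(z)\bigl(1 + T_+(z) + T_-(z)\bigr) &+ B_+(z,u)\bigl(1 + T_+(z) + u\,T_-(z)\bigr) \\
&+ B_-(z,u)\bigl(1 + u\,T_+(z) + T_-(z)\bigr),
\end{align*}
where the three summands correspond to bridges of the three types, and within each summand the three terms inside the parentheses correspond to empty, positive, and negative tails.

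\textbf{Algebraic simplification.} Using $T_+(z) = T_-(z)$ and $B_+(z,u) = B_-(z,u)$, together with the relations $B(z,u) = C(z) + 2B_+(z,u)$ and $T(z) = 1 + 2T_+(z)$, the right-hand side collapses to
\begin{align*}
W(z,u) = B(z,u)\,T(z) + B_+(z,u)\bigl(T(z)-1\bigr)(u-1),
\end{align*}
and substituting $T(z) = W(z)/B(z)$ gives the claim.

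\textbf{Main obstacle.} The only delicate point is the bookkeeping at the junction: one must avoid double-counting the empty-tail case (already covered by the ``$1$'' inside each parenthesis) and correctly observe that a chain bridge followed by a non-empty tail contributes no new sign change (since no preceding nonzero sign exists to be changed). The final collapse is then a short, purely algebraic calculation driven by the identities $B=C+2B_+$ and $T=1+2T_+$.
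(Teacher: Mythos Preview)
Your approach is essentially the paper's own: decompose a walk into its maximal initial bridge and a tail that never returns to the $x$-axis, then bookkeep the possible extra sign change at the junction using $B_+=B_-$ from Lemma~\ref{lem:B+}. Your case split and the final algebraic collapse to $B(z,u)\,T(z)+B_+(z,u)(T(z)-1)(u-1)$ are correct.

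One inaccuracy: your claim $T_+(z)=T_-(z)$ ``by the mirroring argument of Lemma~\ref{lem:posnegexc}'' is not valid in the weighted setting $p_{-1}\neq p_1$. Mirroring preserves weight for excursions (equal numbers of up- and down-steps), which is why Lemma~\ref{lem:posnegexc} and hence $B_+=B_-$ hold; but a non-returning tail need not balance its $\pm1$ steps, so in general $T_+\neq T_-$. Fortunately your computation does not actually need $T_+=T_-$: after using $B_+=B_-$ the two tail contributions combine as $(1+u)(T_++T_-)$, and only the identity $T_++T_-=T-1$ (the tautological tail decomposition) is required to reach the stated formula. So replace the appeal to mirroring for tails by this observation and the proof goes through unchanged.
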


\begin{proof}
	Combinatorially, a walk is either a bridge or a bridge concatenated with a meander that does not return to the $x$-axis again. In the second case an additional sign change appears if the bridge ends with a positive sign and continues with a meander always staying above the $x$-axis, or vice versa. By Lemma~\ref{lem:B+} the desired form follows.
\end{proof}

The next theorem concludes this discussion. Its proof is similar to the one of Theorem~\ref{theo:mainRetZero}. 

\begin{theo}[Limit law for sign changes]
	\label{theo:mainSignChanges}
	Let $X_n$ denote the number of sign changes of Motzkin walks of length $n$. Let $\delta=P'(1)$ be the drift. 
	\begin{enumerate} 
		\item For $\delta \neq 0$ we get convergence to a geometric distribution:
			\begin{align*}
				X_n \stackrel{d}{\to} \operatorname{Geom}\left( \lambda \right), &&& 
				\text{ with } & 
				\lambda &= 
					\begin{cases}
						\frac{p_1}{p_{-1}}, & \text{ for } \delta < 0,\\
						\frac{p_{-1}}{p_{1}}, & \text{ for } \delta > 0.
					\end{cases}
			\end{align*}
		\item For $\delta = 0$ we get convergence to a half-normal distribution:
			\begin{align*}
				\frac{X_n}{\sqrt{n}} \stackrel{d}{\to} \Hc\left(\frac{1}{2} \sqrt{\frac{P''(1)}{P(1)}}\right).
			\end{align*}
	\end{enumerate}
\end{theo}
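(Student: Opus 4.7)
The plan parallels Theorem~\ref{theo:mainRetZero}, starting from the bivariate generating function of the previous proposition. First a purely algebraic simplification: using $E(z)=C(z)(1+E_1(z))$, one verifies
\begin{align*}
B(z,u)+B_+(z,u)(u-1) \;=\; \frac{E(z)}{1-uE_1(z)},
\end{align*}
so that
\begin{align*}
W(z,u) \;=\; \frac{E(z)\,W(z)}{B(z)\,(1-uE_1(z))} \;-\; B_+(z,u)\,(u-1).
\end{align*}
Here $W(z)=1/(1-zP(1))$ is the only source of a pole; $E(z)$, $B(z)$, $E_1(z)$ are controlled by Proposition~\ref{prop:decompu1}, and the correction $B_+(z,u)(u-1)$ stays bounded at the dominant singularity.

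In the drifted case $\delta\neq 0$ we have $\rho_1:=1/P(1)<\rho$, so the pole of $W(z)$ dominates and all other factors are analytic at~$\rho_1$. A residue extraction yields $\PR[X_n=k]\sim (1-E_1(\rho_1))\,E_1(\rho_1)^k$, a geometric law with ratio $E_1(\rho_1)$. To identify this ratio I would observe that $u_1(\rho_1)$ is the smaller positive root of $P(u)=P(1)$, i.e.\ of $(p_1u-p_{-1})(u-1)=0$: hence $u_1(\rho_1)=1$ for $\delta<0$ and $u_1(\rho_1)=p_{-1}/p_1$ for $\delta>0$. Substituting into $E_1(z)=u_1(z)(1-p_0z)/(p_{-1}z)-1$ and simplifying with $\rho_1=1/P(1)$ reproduces the stated values of $\lambda$.

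In the zero-drift case $\delta=0$ we have $\tau=1$ and $\rho_1=\rho$, so we apply Theorem~\ref{theo:theo4}. Proposition~\ref{prop:decompu1} gives $u_1(z)=1-C\sqrt{1-z/\rho}+\LandauO(1-z/\rho)$ with $C=\sqrt{2P(1)/P''(1)}$, whence $E_1(z)=1-2C\sqrt{1-z/\rho}+\LandauO(1-z/\rho)$ and $B(z)\sim C/(2\sqrt{1-z/\rho})$. Write $1/W(z,u) = (1-uE_1(z))/\bigl[E(z)W(z)/B(z)-(E(z)-C(z))(u-1)\bigr]$ and multiply numerator and denominator by $s:=\sqrt{1-z/\rho}$ to clear the $1/s$ singularity of $W(z)/B(z)$; each of the new numerator and denominator then takes the form $p(z,u)+q(z,u)\,s$ with $p,q$ analytic in $z,u$ near $(\rho,1)$, the denominator being non-vanishing there. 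Rationalizing by the conjugate of the denominator produces the decomposition $1/W(z,u)=g(z,u)+h(z,u)\sqrt{1-z/\rho}$ required by hypothesis~[H'].

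The main technical obstacle is verifying the four conditions $g(\rho,1)=h(\rho,1)=g_u(\rho,1)=g_{uu}(\rho,1)=0$. The structural observation that carries the argument is that after multiplication by $s$ the analytic-in-$z$ part of the resulting numerator of $1/W(z,u)$ inherits an overall factor $(1-z/\rho)$: indeed $s\cdot s=1-z/\rho$, and the only $s^0$-contribution to $sD$ (with $D:=1-uE_1$) arises from this product. Consequently $g(z,u)$ is divisible by $(1-z/\rho)$, so $g(\rho,u)\equiv 0$ in a neighborhood of $u=1$, which instantly kills the three $u$-conditions. A direct computation then gives $g_z(\rho,1)=-C^2/(\rho E(\rho))$ and $h_u(\rho,1)=-C/(2E(\rho))$, both nonzero; the formula $\sigma=\sqrt{2}\,h_u(\rho,1)/(\rho\,g_z(\rho,1))$ collapses to $\sigma=1/(C\sqrt{2})=\tfrac{1}{2}\sqrt{P''(1)/P(1)}$, matching the stated limit law.
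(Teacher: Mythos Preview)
Your plan is correct and follows exactly the route the paper indicates (``its proof is similar to the one of Theorem~\ref{theo:mainRetZero}''): pole extraction at $\rho_1=1/P(1)$ for $\delta\neq 0$, and verification of~[H'] for Theorem~\ref{theo:theo4} when $\delta=0$. Your algebraic simplification $B(z,u)+B_+(z,u)(u-1)=E(z)/(1-uE_1(z))$ and the structural observation that the analytic part of $s\cdot(sB)(1-uE_1)$ carries a factor $(1-z/\rho)$, forcing $g(\rho,u)\equiv 0$, are clean additions that make the $g_u=g_{uu}=0$ checks automatic; the resulting values $g_z(\rho,1)=-C^2/(\rho E(\rho))$, $h_u(\rho,1)=-C/(2E(\rho))$ and $\sigma=1/(C\sqrt{2})$ are correct.
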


\subsection{Height of Motzkin walks}

For a path of length $n$ we define the \emph{height} as its maximally attained $y$-coordinate, see Figure~\ref{fig:motzkinheight}. Formally, let $\omega = (\omega_k)_{k=0}^n$ be a walk. 
Then its height is given by
$
	\max_{k \in \{0,\ldots,n\}} \omega_k.
$

\begin{figure}[ht]
	\begin{center}	
		\includegraphics[width=0.8\textwidth]{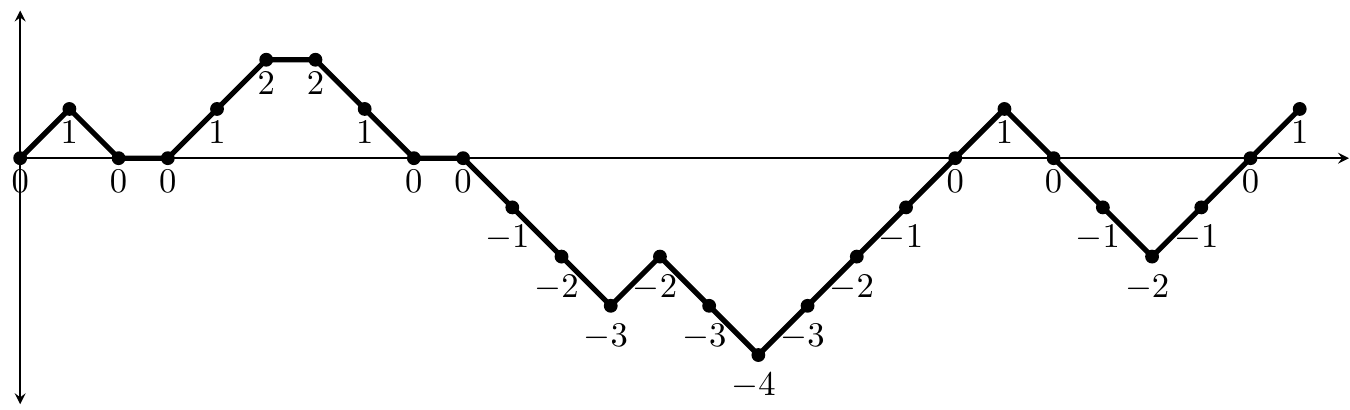}
		\caption{A Motzkin walk of height $2$. The relative heights are given at every node.}
		\label{fig:motzkinheight}
	\end{center}
\end{figure}

In order to analyze the distribution of heights, we define the bivariate generating function $F(z,u) = \sum_{n,h \geq 0} f_{nh}z^n u^h$. The coefficient $f_{nh}$ represents the number of walks of height $h$ among walks of length $n$. 
First we need a relation between the branches of the kernel equation:

\begin{lemma}
	\label{lem:u1v1relation}
	Let $P(u) = p_{-1}u^{-1} + p_0 + p_1 u$. Then the small branch $u_1(z)$ and the large branch $u_2(z)$ of the kernel equation $1-zP(u)=0$ fulfil
	\begin{align*}
		u_1(z) u_2(z) &= \frac{p_{-1}}{p_1} & \text{ and } && u_1(z) + u_2(z) &= \frac{1-zp_0}{zp_{1}}.
	\end{align*}
\end{lemma}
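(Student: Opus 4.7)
The plan is to rewrite the kernel equation as a quadratic polynomial in $u$ and then read off the two Vieta identities, which will directly give both claimed relations.

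First I would multiply the kernel equation $1 - zP(u)=0$ through by $u$ to clear the $u^{-1}$ term. Using the explicit form $P(u) = p_{-1}u^{-1} + p_0 + p_1 u$, this yields
\begin{equation*}
    p_1 z \, u^2 + (p_0 z - 1)\, u + p_{-1}z \;=\; 0.
\end{equation*}
This is a genuine quadratic in $u$ (its leading coefficient $p_1 z$ is nonzero away from $z=0$), and its two roots are precisely $u_1(z)$ and $u_2(z)$ from Proposition~\ref{prop:rootsofthekernel}.

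Next I would apply Vieta's formulas to this quadratic. The product of the two roots is the constant term divided by the leading coefficient, which equals $\frac{p_{-1}z}{p_1 z} = \frac{p_{-1}}{p_1}$. The sum of the roots is minus the linear coefficient divided by the leading coefficient, which equals $-\frac{p_0 z - 1}{p_1 z} = \frac{1 - p_0 z}{p_1 z}$. Both identities are exactly the ones claimed in the statement.

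There is essentially no obstacle here: the argument is a one-line application of Vieta's formulas once the kernel equation has been cleared of the negative power of $u$. The only mild care needed is that the formulas must be interpreted as an identity of formal Laurent/power series in $z$ (or as an identity of analytic functions on the appropriate domain), and this is legitimate because $u_1(z)$ and $u_2(z)$ are the two analytic branches of the algebraic curve $K(z,u)=0$ provided by Proposition~\ref{prop:rootsofthekernel}, and the factorization $p_1 z \, u^2 + (p_0 z - 1)u + p_{-1}z = p_1 z\,(u - u_1(z))(u - u_2(z))$ holds on the region where both branches are defined.
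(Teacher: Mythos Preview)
Your proposal is correct and essentially identical to the paper's own proof: the paper writes the factorization $u(1-zP(u)) = -zp_1(u-u_1(z))(u-u_2(z))$ and compares coefficients, which is exactly Vieta's formulas applied to the quadratic you wrote down.
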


\begin{proof}
	The kernel equation factorizes into $u(1-zP(u)) = -zp_1 (u-u_1(z)) (u-u_2(z))$. Comparing the coefficients gives the results.
\end{proof}

This relation gives us an explicit expression of $F(z,u)$ in terms of the large and small branch. For the final analysis we will use the latter.

\begin{theo}
	The bivariate generating function of Motzkin walks 
	(where $z$ marks the length, and $u$ marks the height of the walk) 
	is given by
	\begin{align*}
		F(z,u) &= \frac{1}{1-zP(1)} \frac{u_2(z)-1}{u_2(z)-u}
		        = \frac{1}{1-zP(1)} \frac{1-\frac{p_{1}}{p_{-1}}u_1(z)}{1-u\frac{p_{1}}{p_{-1}}u_1(z)}.
	\end{align*}
\end{theo}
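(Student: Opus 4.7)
The plan is to recover $F(z,u)$ by decomposing a Motzkin walk of height exactly $h$ into a first passage from altitude $0$ up to $h$ followed by a walk that starts at altitude $h$ and never rises above $h$, and then summing over $h$.

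The first step is to identify the generating function of first passages from altitude $0$ to altitude $1$, that is, of paths ending with a $+1$ step at altitude $1$ whose altitudes at all earlier times are $\le 0$. I would factor such a path as a walk from $0$ to $0$ with altitudes $\le 0$, followed by the terminating $+1$ step. Reflecting across the $x$-axis puts the walks from $0$ to $0$ with altitudes $\le 0$ in bijection with excursions in the Motzkin model obtained by exchanging the weights $p_1$ and $p_{-1}$; the small branch in that swapped model is by Proposition~\ref{prop:rootsofthekernel} exactly $\tilde u_1(z) = (p_1/p_{-1})u_1(z)$, and the swapped-excursion generating function from Table~\ref{tab:dirTypes} is $\tilde u_1/(zp_1) = u_1/(zp_{-1})$. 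Multiplying by the factor $zp_1$ for the final $+1$ step yields $\alpha(z) := (p_1/p_{-1})u_1(z)$, and by Lemma~\ref{lem:u1v1relation} this coincides with $1/u_2(z)$.

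Next I would iterate: each first passage from level $k$ to $k+1$ is, after a vertical translation, identical to a first passage from $0$ to $1$, so a first passage from $0$ to $h$ decomposes uniquely by cutting at the successive first hitting times of $1,2,\dots,h-1$ as a concatenation of $h$ such factors and hence has generating function $\alpha(z)^h$. After the first visit to $h$ the walk is any path starting at $h$ and staying $\le h$; a vertical shift reduces this to walks from $0$ staying $\le 0$, which by the same reflection argument identifies with meanders in the weight-swapped model, with generating function $(1 - \tilde u_1)/(1 - z\tilde P(1)) = (1 - \alpha(z))/(1 - zP(1))$ since $\tilde P(1) = P(1)$. Summing the contributions of walks of height exactly $h$ gives
\begin{align*}
F(z,u) \;=\; \sum_{h\ge 0} u^h \, \alpha(z)^h \, \frac{1 - \alpha(z)}{1 - zP(1)} \;=\; \frac{1}{1 - zP(1)}\cdot\frac{1 - \alpha(z)}{1 - u\,\alpha(z)},
\end{align*}
which is the second form of the statement. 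Replacing $\alpha$ by $1/u_2$ and multiplying numerator and denominator by $u_2$ yields the first form.

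The most delicate part of the argument is the weight bookkeeping under the vertical flip: mirroring a walk exchanges up- and down-steps, so generating functions taken with the original weights are expressed in terms of standard objects (excursion, meander) in the \emph{weight-swapped} Motzkin model. The key simplification is the pair of identities $\tilde u_1 = (p_1/p_{-1})u_1$ and $\tilde P(1) = P(1)$, both of which follow directly from Proposition~\ref{prop:rootsofthekernel}; once these are in place, the bijections and the geometric summation are routine.
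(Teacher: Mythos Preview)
Your argument is correct and genuinely different from the paper's. The paper invokes the Banderier--Nicod\`eme formula for walks confined below a wall, $F^{[-\infty,h]}(z)=\bigl(1-(1/u_2)^{h+1}\bigr)/(1-zP(1))$, and then extracts walks of height exactly~$h$ by differencing before summing in~$u$. You instead build walks of height exactly~$h$ directly via the first-passage decomposition, deriving the factor $\alpha(z)^h$ combinatorially and recognizing the tail as a reflected meander in the weight-swapped model. Your route is more self-contained (no external citation needed) and makes the geometric-series structure in~$u$ transparent from the start; the paper's route is shorter on the page because it outsources the main analytic work to~\cite{bani10}. Both arrive at the identical per-height contribution $\alpha^h(1-\alpha)/(1-zP(1))$ with $\alpha=1/u_2=(p_1/p_{-1})u_1$, so the two proofs are essentially dual: yours cuts at the \emph{first} hitting time of the maximum, theirs implicitly exploits the telescoping in the ceiling parameter~$h$.
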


\begin{proof}
	Banderier and Nicod{\`e}me derived in \cite[Theorem 2]{bani10} the generating function $F^{[-\infty, h]}(z)$ for walks staying always below a wall $y=h$. For the case of Motzkin walks we get
	$
		F^{[-\infty, h]}(z) = \frac{1 - \left(\frac{1}{u_2(z)}\right)^{h+1}}{1-zP(1)},
	$
	where $u_2(z)$ is the large branch of the kernel equation. From this we directly get the generating function $F^{[h]}(z)$ for walks that have height exactly $h$. For $h \geq 1$ it equals
	\begin{align*}
		F^{[h]}(z) &= F^{[-\infty, h]}(z) - F^{[-\infty, h-1]}(z) = \frac{u_2(z)-1}{1-zP(1)} \left(\frac{1}{u_2(z)}\right)^{h}.
	\end{align*}
	The last formula also holds for $h=0$. Finally, marking the heights by $u$ and summing over all possibilities yields the result. 
	The second formula is a consequence of Lemma~\ref{lem:u1v1relation}.
\end{proof}

Let $X_n$ be the random variable for the height of a random walk of length $n$. Thus,
$
	\PR[X_n = k] = \frac{[u^k z^n] F(z,u)}{[z^n] F(z,1)} = \frac{[u^k z^n] F(z,u)}{P(1)^n}. 
$
This time the behavior will be different for $\delta <0$ and $\delta >0$. 
We omit its proof, however the ideas are again similar to the ones of Theorem~\ref{theo:mainRetZero}. 

\pagebreak
\begin{theo}[Limit law for the height]
	\label{theo:height}
	Let $X_n$ denote the height of a Motzkin walk of length $n$. Let $\delta=P'(1)$ be the drift. 
	\begin{enumerate} 
		\item For $\delta < 0$ we get convergence to a geometric distribution:
			\begin{align*}
				X_n \stackrel{d}{\to} \operatorname{Geom}\left( \frac{p_1}{p_{-1}} \right) .
			\end{align*}
		\item For $\delta = 0$ the standardized random variable converges to a half-normal distribution:
			\begin{align*}
				\frac{X_n}{\sqrt{n}} \stackrel{d}{\to} \Hc\left(\sqrt{\frac{P''(1)}{P(1)}}\right).
			\end{align*}
		\item For $\delta > 0$ the standardized random variable converges to a normal distribution:
			\begin{align*}
				\frac{X_n-\mu n}{\sigma \sqrt{n}} &\stackrel{d}{\to} \Nc\left( 0,1 \right), &
				\mu &= \frac{\delta}{P(1)}, &
				\sigma^2 &= 
					1 - \frac{p_0}{P(1)} - \left(\frac{\delta}{P(1)}\right)^2.
			\end{align*}		
	\end{enumerate}
\end{theo}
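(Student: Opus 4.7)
My plan is to identify the dominant singularity of $F(z,u)$ in each drift regime and apply the appropriate limit scheme. Writing $v(z) := (p_1/p_{-1})\,u_1(z)$, the formula rewrites as
\begin{align*}
	F(z,u) = \frac{1}{1-zP(1)}\cdot\frac{1-v(z)}{1-uv(z)},
\end{align*}
with candidate singularities $\rho_1 := 1/P(1)$ (fixed pole), $\rho = 1/P(\tau)$ (algebraic, from $v$), and the movable zero of $1-uv(z)$. The decisive observation is the value of $v(\rho_1)$: solving $P(u_1(\rho_1)) = 1/\rho_1 = P(1)$ along the small branch gives $u_1(\rho_1) = 1$ when $\tau > 1$, and $u_1(\rho_1) = p_{-1}/p_1 = \tau^2$ when $\tau < 1$. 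Hence $v(\rho_1) = p_1/p_{-1} < 1$ for $\delta < 0$, $v(\rho) = 1$ for $\delta = 0$, and $v(\rho_1) = 1$ for $\delta > 0$; the three cases of the theorem correspond precisely to these three algebraic configurations.

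\textbf{Negative drift.} $\rho_1 < \rho$, so $v$ is analytic at $\rho_1$ and $F(z,u)$ has a simple pole there. Standard singularity analysis gives
\begin{align*}
	[z^n]F(z,u) \sim P(1)^n \cdot \frac{1-p_1/p_{-1}}{1-u\,p_1/p_{-1}},
\end{align*}
uniformly for $u$ in a small neighborhood of $1$, whose normalized form is the generating function of a geometric distribution with parameter $p_1/p_{-1}$.

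\textbf{Zero drift.} Here $\tau = 1$, $\rho=\rho_1$, $v = u_1$, and $v(\rho) = 1$. By Proposition~\ref{prop:decompu1} with $C = \sqrt{2P(1)/P''(1)}$, a short expansion near $(z,u)=(\rho,1)$ gives
\begin{align*}
	\frac{1}{F(z,u)} = \frac{1-u}{C}\sqrt{1-z/\rho} + u\left(1-\frac{z}{\rho}\right) + \LandauO\bigl((1-z/\rho)^{3/2}\bigr),
\end{align*}
which fits the scheme of Theorem~\ref{theo:theo4} with $g(\rho,1)=h(\rho,1)=g_u(\rho,1)=g_{uu}(\rho,1)=0$, $g_z(\rho,1) = -P(1)$, and $h_u(\rho,1)=-1/C$. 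Theorem~\ref{theo:theo4} then produces a half-normal law with $\sigma = \sqrt{2}/C = \sqrt{P''(1)/P(1)}$, matching the claim.

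\textbf{Positive drift.} The cancellation $v(\rho_1)=1$ implies that $(1-v(z))$ and $(1-zP(1))$ vanish simultaneously at $\rho_1$, so $R(z):=(1-v(z))/(1-zP(1))$ extends analytically across $\rho_1$ and $F(z,u) = R(z)/(1-uv(z))$ exhibits a movable simple pole $\zeta(u)$ defined implicitly by $v(\zeta(u))=1/u$, with $\zeta(1)=\rho_1$. Hwang's quasi-powers theorem \cite[Theorem IX.8]{flaj09} applied to the resulting asymptotic $\E[u^{X_n}] \sim A(u)(P(1)\zeta(u))^{-n}$ yields asymptotic normality with $\mu = -\zeta'(1)/\zeta(1)$ and $\sigma^2$ determined by $\zeta''(1)$; implicit differentiation of $v(\zeta(u))=1/u$ together with the kernel relation $P(u_1(z))=1/z$ gives $v'(\rho_1) = P(1)^2/\delta$, hence $\mu = \delta/P(1)$, and a parallel computation of $\zeta''(1)$ produces the stated variance $\sigma^2 = 1 - p_0/P(1) - (\delta/P(1))^2$. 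The main technical obstacle is precisely this positive-drift case: recognizing the cancellation that moves the singularity from the fixed $\rho_1$ to the analytic curve $\zeta(u)$, and performing the implicit-differentiation bookkeeping to extract the explicit variance formula.
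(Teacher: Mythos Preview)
Your proposal is correct and follows the approach the paper itself suggests. The paper actually omits the proof of this theorem, stating only that ``the ideas are again similar to the ones of Theorem~\ref{theo:mainRetZero}.'' Your treatment of the negative-drift case (simple pole at $\rho_1$, geometric limit) and of the zero-drift case (verifying the hypotheses of Theorem~\ref{theo:theo4} via the expansion of $1/F(z,u)$) mirrors exactly what the paper does for returns to zero. The positive-drift case is the one genuinely new ingredient relative to Theorem~\ref{theo:mainRetZero}: you correctly spot the cancellation $v(\rho_1)=1$ that removes the fixed pole and leaves a movable simple pole $\zeta(u)$, and then invoke the quasi-powers scheme; this is precisely the natural route and your computation of $v'(\rho_1)=P(1)^2/\delta$ via the kernel relation is clean. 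In short, you have supplied the proof the paper leaves out, along the lines the paper indicates.
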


\vspace{-2mm}

\section{Conclusion}
\label{sec:conclusion}

Drmota and Soria \cite{DrSo97} presented three schemata leading to three different limiting distributions: Rayleigh, normal, and a convolution of both. 
This paper can be seen as an extension, by adding Theorem~\ref{theo:theo4} yielding a half-normal distribution to this family. 
Other popular limit theorems are Hwang's quasi-powers theorem \cite{hwan98}, and (implied by it) the supercritical composition scheme \cite[Proposition~IX.6]{flaj09}. These lead to a normal distribution.

The question may arise, how Theorem~\ref{theo:theo4} behaves in the situation of a singularity $\rho(u)$ with $\rho'(1) \neq 0$ and $\rho''(1) \neq 0$, compare Remark~\ref{rem:strongermaintheo}. This remains an object for future research. 

However, the more interesting question is if more ``natural'' appearances of such situations exist. 
Another known example is the limit law of the final altitude of meanders with zero drift in the reflection-absorption model in \cite{bawa15c}. 
Chronologically, this was the starting point for the research of this paper.
But this distribution also appears in number theory, see \cite{ggiv07}.

Yet another question is how the zero drift behavior of the analyzed parameters generalizes to other lattice path models. 
We will comment on these questions in the full version of this work.

	Summing up, the applications to Motzkin paths show that intuition might lead you into the wrong direction. 
	In Table~\ref{tab:compretsign} we see a comparison of the parameters. Obviously, the situation depends strongly on the drift. The critical case of a $0$ drift seems to be the most delicate one, as it changes the nature of the law. In this case the limiting probability functions are concentrated at $0$. In particular the expected value for $\Theta(n)$ trials grows like $\Theta(\sqrt{n})$ and not linearly. Equipped with the presented tools they might still be a ``shock to intuition and common sense'' but should not come  ``unexpected'' anymore.

	\begin{table}[ht]
	\begin{center}
	\begin{tabular}{|c||c|c|c|}
		\hline drift      & returns to zero & sign changes & height \\
		\hline\hline $\delta<0$ & 
			$\operatorname{Geom}\left( \frac{p_{-1}-p_{1}}{P(1)} \right)$ & 
			$\operatorname{Geom}\left( \frac{p_{1}}{p_{-1}} \right) $ & 
			$\operatorname{Geom}\left( \frac{p_{1}}{p_{-1}} \right) $
			\\
		\hline $\delta=0$ & 
			$\Hc\left(\sqrt{\frac{P(1)}{P''(1)}}\right)$ & 
			$\Hc\left(\frac{1}{2} \sqrt{\frac{P''(1)}{P(1)}}\right) $ & 
			$\Hc\left(\sqrt{\frac{P''(1)}{P(1)}}\right) $
			\\
		\hline $\delta>0$ & 
			$\operatorname{Geom}\left( \frac{p_{1}-p_{-1}}{P(1)} \right)$ & 
			$\operatorname{Geom}\left( \frac{p_{-1}}{p_{1}} \right)$ & 
			Normal distribution
			\\
		\hline
  \end{tabular}
\end{center}
\caption{Summary of the limit laws for Motzkin paths.}
\label{tab:compretsign}
\end{table}

\vspace{-6mm}

\phantomsection
\addcontentsline{toc}{section}{Acknowledgments}
\section*{Acknowledgments}
\label{sec:ack}
The author is grateful to Cyril Banderier, Bernhard Gittenberger, and Michael Drmota for insightful discussions. 
The author also thanks the anonymous referees for their suggested improvements.
This work was supported by the Austrian Science Fund (FWF) grant SFB F50-03. 
\phantomsection

\clearpage
\phantomsection
\addcontentsline{toc}{section}{Bibliography} 
\bibliographystyle{plain}
\bibliography{literature_LP}

\end{document}